\DeclareMathOperator{\diam}{diam}
\DeclareMathOperator{\vol}{vol}
\DeclareMathOperator{\Area}{Area}
\DeclareMathOperator{\Con}{Con}
\numberwithin{equation}{section}
\newtheorem{thmnr}{Theorem}[section]
\newtheorem{propnr}[thmnr]{Proposition}
\newtheorem{lemnr}[thmnr]{Lemma}
\newtheorem{cornr}[thmnr]{Corollary}
\theoremstyle{definition}
\newtheorem{dfnnr}[thmnr]{Definition}
\newtheorem{rmknr}[thmnr]{Remark}
\newtheorem*{prob}{Problem}
\newtheorem*{conj}{Conjecture}
\thanks{The first named author is partially supported by the NSF grant DMS-1207782.  
The second named author is partially supported by the NSF grant DMS-1207655.  
The third named author is partially supported by the NSF grant DMS-1406209.}
\begin{document}

\title{Quasicircle boundaries and exotic almost-isometries}
\author[J.-F. Lafont]{Jean-Fran\c{c}ois Lafont}
\author[B. Schmidt]{Benjamin Schmidt}
\author[W. Van Limbeek]{Wouter Van Limbeek}

\date{\today}

\begin{abstract}
We consider properly discontinuous, isometric, convex cocompact actions of surface groups $\Gamma$ on a 
CAT(-1) space $X$. We show that the limit set of such an action, equipped with the canonical visual metric, is a 
(weak) quasicircle in the sense of Falconer and Marsh.  It follows that the visual metrics on such limit sets are 
classified, up to bi-Lipschitz equivalence, by their Hausdorff dimension. This result applies in particular to 
boundaries at infinity of the universal cover of a locally CAT(-1) surface. We show that any 
two periodic CAT(-1) metrics on $\mathbb H^2$ can be scaled so as to be almost-isometric (though in 
general, no equivariant almost-isometry exists). We also construct, on each higher genus surface, $k$-dimensional
families of equal area Riemannian metrics, with the property that their lifts to the universal covers are 
pairwise almost-isometric 
but are {\bf not} isometric to each other. Finally, we exhibit a gap phenomenon for the optimal multiplicative 
constant for a quasi-isometry between periodic CAT(-1) metrics on $\mathbb H^2$.
\end{abstract}

\maketitle


\section{Introduction}

Consider a Riemannian manifold $(M,g)$ with curvature $\leq -1$ (or more generally, 
equipped with a locally CAT(-1) metric).  The fundamental group $\pi_1(M)$ acts via deck transformations on 
the universal cover $X$, and the metric $g$ lifts to a $\pi_1(M)$-invariant metric $\tilde g$ on $X$. An important
theme has been the study of the dynamics of $\pi_1(M)$ on the boundary at infinity $\partial X$. If one equips the
boundary at infinity with the canonical visual metric $d_\partial$ (see Definition \ref{dfn:vismetric}), then it is well-known 
that the boundary at infinity exhibits some fractal-like behavior. More generally this phenomenon occurs if $\pi_1(M)$ acts properly discontinuously and convex cocompactly on a CAT(-1) space $X$. Our first result follows this general philosophy:

\begin{thmnr} Let $\Gamma$ be a surface group, and let $(X,d)$ be any proper CAT(-1) space on which $\Gamma$ acts isometrically, properly discontinuously, and convex cocompactly. Let $\Lambda$ be the limit set of the $\Gamma$-action on $X$ and let $d_\partial$ denote the canonical visual metric on $\Lambda$. Then $(\Lambda, d_\partial)$ is a (weak) quasicircle in the sense of Falconer-Marsh. 
\label{thm:main}
\end{thmnr}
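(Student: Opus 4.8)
The plan is to verify directly that $(\Lambda, d_\partial)$ satisfies the axioms defining a (weak) quasicircle in the sense of Falconer--Marsh. These amount to three conditions: (i) $\Lambda$ is homeomorphic to the circle $S^1$; (ii) $(\Lambda, d_\partial)$ is Ahlfors regular of some dimension $Q$ (in particular uniformly perfect); and (iii) $(\Lambda, d_\partial)$ is quasi-self-similar, i.e.\ there are constants $a, r_0 > 0$ so that every small metric ball $B(\xi, r) \cap \Lambda$ (with $r \le r_0$) admits a quasi-similarity of ratio $\asymp 1/r$ onto a piece of $\Lambda$ of diameter bounded below. Conditions (i) and (ii) I would dispatch using standard facts. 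For (i): since $\Gamma$ is a closed surface group it is Gromov hyperbolic with $\partial \Gamma \cong S^1$, and a convex cocompact action gives a $\Gamma$-equivariant quasi-isometry from $\Gamma$ onto the convex hull $\mathrm{Hull}(\Lambda) \subseteq X$, which extends to a $\Gamma$-equivariant homeomorphism $\partial \Gamma \to \Lambda$; hence $\Lambda$ is a topological circle. For (ii): the $\Gamma$-action on $\mathrm{Hull}(\Lambda)$ is a cocompact isometric action on a proper CAT($-1$) (hence Gromov hyperbolic) space, so by Coornaert's theorem the boundary with its visual metric is Ahlfors $Q$-regular, where $Q$ is its Hausdorff dimension (expressible via the critical exponent of $\Gamma$). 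This supplies the regularity and uniform perfectness required.

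The crux is (iii), which I would extract from the conformal action of isometries on the boundary. Fixing a basepoint $o$, Bourdon's visual metric satisfies $d_\partial(\xi, \eta) \asymp e^{-(\xi \mid \eta)_o}$, and for $g \in \isom(X)$ the Gromov product transforms by the isometry-invariance identity $(g\eta \mid g\zeta)_o = (\eta \mid \zeta)_{g^{-1}o}$. Given $\xi \in \Lambda$ and $0 < r \le r_0$, set $t \asymp \log(1/r)$ and let $p$ be the point at distance $t$ from $o$ along a geodesic ray toward $\xi$; by cocompactness of the action on $\mathrm{Hull}(\Lambda)$ there is $g \in \Gamma$ with $d(g^{-1}o, p) \le R$, where $R$ is the diameter of the quotient. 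Since points of $B(\xi, r) \cap \Lambda$ have geodesics from $o$ that fellow-travel the ray to $\xi$ up to distance $\approx t$, changing the basepoint from $o$ to $g^{-1}o \approx p$ yields, for all $\eta, \zeta \in B(\xi, r) \cap \Lambda$,
\[
(g\eta \mid g\zeta)_o = (\eta \mid \zeta)_o - t + O(1),
\]
with an $O(1)$ error controlled solely by $R$ and the hyperbolicity constant. Exponentiating, $g$ expands distances on $B(\xi, r) \cap \Lambda$ by a factor $\asymp e^t \asymp 1/r$ with multiplicative distortion bounded independently of $\xi$ and $r$; that is, $g$ is a quasi-similarity of ratio $\asymp 1/r$ carrying $B(\xi, r) \cap \Lambda$ onto a subset of $\Lambda$ of diameter bounded below (the lower bound on the diameter coming from the uniform perfectness in (ii)). Taking inverses $g^{-1}$ produces the contracting quasi-similarities, establishing (iii).

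The main obstacle is making the distortion bound in (iii) genuinely uniform: one must show the multiplicative constant relating $d_\partial(g\eta, g\zeta)$ to $(1/r)\, d_\partial(\eta, \zeta)$ does not degenerate as $r \to 0$ or as $\xi$ ranges over $\Lambda$. This reduces to the claim that the Gromov product is, up to a bounded additive error, affine in $t$ along the ray to $\xi$ over the whole ball $B(\xi, r) \cap \Lambda$, which is precisely where the CAT($-1$) thin-triangles comparison and the uniform bound $d(g^{-1}o, p) \le R$ from convex cocompactness are both essential; without the latter the return element could displace $o$ by an unbounded amount and spoil the estimate. Once (i)--(iii) are verified, $(\Lambda, d_\partial)$ is a (weak) quasicircle by definition, and the bi-Lipschitz classification asserted in the introduction follows immediately from Falconer--Marsh's classification theorem.
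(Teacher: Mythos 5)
Your proposal is correct in substance, but it reaches the crucial self-similarity property by a genuinely different route than the paper. On properties (i) and (iiia) (in Falconer--Marsh's numbering) you and the paper agree: the circle structure comes from convex cocompactness and $\partial\Gamma\cong S^1$, and Ahlfors regularity of the visual metric gives (iiia) (you cite Coornaert; the paper uses the CAT(-1) analogue, Bourdon's Theorem \ref{thm:bourdon}). The divergence is in the expanding-similarity property (ii). You run the standard ``approximate self-similarity of boundaries'' argument: push the basepoint a distance $t\approx\log(1/r)$ along the ray toward the center $\xi$ of the ball, return it to a fundamental domain by some $g\in\Gamma$ using cocompactness on the hull, and track the Gromov product through the basepoint-change identity $(g\eta\mid g\zeta)_w=(\eta\mid\zeta)_{g^{-1}w}=(\eta\mid\zeta)_w-t+O(\delta+R)$, valid because $(\eta\mid\zeta)_w\geq\log\frac{1}{2r}$ forces the geodesics to $\eta,\zeta$ to fellow-travel the ray past $p$. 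The paper instead works with the bi-Lipschitz substitute $\hat d_w(x,y)=e^{-d(w,\gamma_{xy})}$, spans the arc $N$ by a geodesic $\eta$, translates the projection of $w$ onto $\eta$ into the fundamental domain, and proves the two-sided estimate $R-A\leq d(w,\xi)-d(w,\gamma\xi)\leq R+A$ (Proposition \ref{lem:expansion}) by explicit thin-triangle and projection estimates; crucially, this needs the separation statement of Lemma \ref{lem:separation} -- that $N_K(\eta)$ separates the hull, proved via the quasi-isometry of the hull with $\bbH^2$ and hence using the surface-group hypothesis -- to guarantee that every geodesic from $w$ to a chord $\xi$ of $N$ crosses $N_K(\eta)$. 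Your Gromov-product argument needs no separation and uses the circle topology only for property (i), so it in fact establishes the self-similarity of the limit set of any convex cocompact action on a proper CAT(-1) space; it is shorter and more general, while the paper's proof buys an elementary, picture-driven argument at the cost of the separation lemma.

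One overstatement to fix: your closing claim that ``taking inverses $g^{-1}$ produces the contracting quasi-similarities, establishing (iii)'' does not yield Falconer--Marsh's condition (iii) as stated, which requires a map $f:C\rightarrow B\cap C$ defined on \emph{all} of $C$. Your $g^{-1}$ is a contraction of ratio $\asymp r$ only on the definite-diameter piece $g(B(\xi,r)\cap\Lambda)$; since $g^{-1}\Lambda=\Lambda$, no single group element can map all of $\Lambda$ into a small ball. This is harmless for the theorem as stated, which asserts only the \emph{weak} quasicircle property -- conditions (i), (ii), (iiia), exactly what the paper verifies and all that the Falconer--Marsh classification uses -- but you should either drop that sentence or restrict the domain of the claimed contraction.
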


As mentioned above, this theorem applies in particular if $(X,d)$ is the universal cover of a closed surface equipped with a locally CAT(-1) metric and $\Gamma$ acts by deck transformations. The results below are obtained from this special case of Theorem \ref{thm:main}.

\begin{cornr} Let $(M_1, d_1), (M_2, d_2)$ be any pair of closed surfaces equipped with locally CAT(-1) metrics,
and let $(X_i, \tilde d_i)$ be their universal covers. Then one can find real numbers $0< \lambda_i\leq1$, with 
$\max\{\lambda_1, \lambda_2\}=1$, having the property
that $(X_1,\lambda_1 \tilde d_1)$ is almost-isometric to $(X_2, \lambda_2 \tilde d_2)$.
\label{cor:cat-setting}\end{cornr}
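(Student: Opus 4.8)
The plan is to transfer the problem to the boundaries at infinity, where Theorem \ref{thm:main} and the resulting bi-Lipschitz classification apply, and then to promote a bi-Lipschitz boundary equivalence back to an almost-isometry of the universal covers. The first point to record is how scaling the metric interacts with the canonical visual metric. Writing $d_\partial^i$ for the visual metric on $\partial X_i$ associated to $\tilde d_i$ based at some $o_i$ (Definition \ref{dfn:vismetric}), the Gromov product $(\xi|\eta)_{o_i}$ scales linearly under $\tilde d_i \mapsto \lambda_i \tilde d_i$, so the visual metric of the rescaled space is the snowflake $(d_\partial^i)^{\lambda_i}$. Consequently, if $D_i := \dim_H(\partial X_i, d_\partial^i)$ denotes the Hausdorff dimension of the unscaled boundary, then the boundary of $(X_i, \lambda_i \tilde d_i)$ has Hausdorff dimension $D_i/\lambda_i$. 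Here each $D_i$ is finite and strictly positive, since it coincides with the critical exponent of the convex cocompact surface group action.

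Assuming without loss of generality that $D_1 \le D_2$, I would set $\lambda_2 = 1$ and $\lambda_1 = D_1/D_2 \in (0,1]$, so that $\max\{\lambda_1,\lambda_2\} = 1$ and $0 < \lambda_i \le 1$. With these choices both rescaled boundaries have Hausdorff dimension exactly $D_2$. Since $\lambda_i \le 1$, the rescaled metrics $\lambda_i \tilde d_i$ are again proper CAT(-1) metrics carrying cocompact surface group actions, so Theorem \ref{thm:main} applies to each and shows that both rescaled boundaries are weak quasicircles. Invoking the Falconer--Marsh classification of weak quasicircles up to bi-Lipschitz equivalence by Hausdorff dimension, the two rescaled boundaries $(\partial X_1, (d_\partial^1)^{\lambda_1})$ and $(\partial X_2, (d_\partial^2)^{\lambda_2})$ are then bi-Lipschitz homeomorphic.

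It remains to convert this boundary equivalence into an almost-isometry $F \colon (X_1, \lambda_1 \tilde d_1) \to (X_2, \lambda_2 \tilde d_2)$, and this is the step I expect to be the main obstacle. The easy direction is standard: an almost-isometry distorts Gromov products by a bounded additive amount and hence induces a bi-Lipschitz boundary map. For the converse I would first translate the bi-Lipschitz condition into the statement that the boundary homeomorphism preserves Gromov products up to a uniform additive constant, and then realize the map at the level of the spaces using the geodesic structure --- for instance via the hyperbolic-cone machinery of Buyalo--Schroeder, combined with the fact that a cocompact CAT(-1) space is almost-isometric to the hyperbolic cone over its boundary. The delicate points are to keep the additive error uniform over all of $X_1$ and to verify coarse surjectivity; I would emphasize that the resulting $F$ need not intertwine the two group actions, which is consistent with the general absence of equivariant almost-isometries noted in the abstract.
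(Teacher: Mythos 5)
Your proposal is correct and takes essentially the same route as the paper: rescale so that the Hausdorff dimensions of the visual boundaries agree (dimension scales by $1/\lambda$, and choosing $\lambda_i\leq 1$ keeps the rescaled spaces CAT(-1)), then apply Theorem \ref{thm:main} together with Falconer--Marsh to obtain a bi-Lipschitz equivalence of boundaries. The step you single out as the main obstacle is precisely the content of Bonk--Schramm's Theorem \ref{thm:bongschramm}, which the paper has already recorded for this purpose and which is established via the hyperbolic cone $\Con(\cdot)$ construction --- the same machinery you propose --- so no additional work is required there.
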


Recall that an {\it almost-isometry} is a quasi-isometry with multiplicative constant $=1$. If $M_1, M_2$ are locally CAT(-1) manifolds, then the existence of an almost-isometry $X_1\rightarrow X_2$ forces strong constraints on the geometry of $M_1$ and $M_2$.  In many cases, this forces the universal covers to be isometric (see \cite{KaLaSc} for more information). On the other hand, it follows immediately from Corollary \ref{cor:cat-setting} that there are examples of almost-isometric universal covers that are not isometric -- take for example $d_1$ to be a Riemannian metric and $d_2$ to be a non-Riemannian metric. 

\begin{cornr} Let $M$ be a closed surface and let $g_1, g_2$ be two Riemannian metrics on $M$
with curvatures $\leq -1$. Equip $\partial X$ with the corresponding canonical visual metrics $\rho_1$ and $\rho_2$. 
Then the following
three statements are equivalent:
\begin{enumerate}
\item The topological entropies of the two geodesic flows on $T^1M$are equal.
\item The boundaries $(\partial X, \rho_1)$ and $(\partial X, \rho_2)$ are bi-Lipschitz equivalent.
\item The universal covers $(X, \tilde g_1)$ and $(X, \tilde g_2)$ are almost-isometric.
\end{enumerate}
\label{cor:lipequiv}\end{cornr}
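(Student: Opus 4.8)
My plan rests on a dictionary identifying statement (1) with a metric statement about the boundary: for each $i$ one has $\dim_H(\partial X,\rho_i)=h_i$. Indeed, by the Patterson--Sullivan theory of CAT($-1$) boundaries (Coornaert), the Hausdorff dimension of $(\partial X,\rho_i)$ equals the critical exponent of the $\Gamma$-action, which for the cocompact action of a closed surface group coincides with the volume entropy of $(X,\tilde g_i)$; and by Manning's theorem this volume entropy equals the topological entropy $h_i$ of the geodesic flow on $T^1M$. First I would record this identity, since it makes $(2)\Rightarrow(1)$ immediate: a bi-Lipschitz homeomorphism preserves Hausdorff dimension, so $h_1=\dim_H(\partial X,\rho_1)=\dim_H(\partial X,\rho_2)=h_2$.

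For $(3)\Rightarrow(2)$ I would use that an almost-isometry $f\colon(X,\tilde g_1)\to(X,\tilde g_2)$ is a $(1,C)$-quasi-isometry, so it distorts Gromov products only by a bounded additive amount. Its boundary extension is therefore bi-Lipschitz for the visual metrics, which are exponentials of Gromov products, the additive error becoming a multiplicative factor $e^{\pm C}$. The only bookkeeping is the change of basepoint (from the one defining $\rho_2$ to its image $f(o)$), which is harmless because $\partial X$ is compact and the relevant Busemann functions are bounded, hence this change is itself bi-Lipschitz. Together with the first paragraph this already yields the chain $(3)\Rightarrow(2)\Rightarrow(1)$.

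The crux, and the step I expect to be the main obstacle, is $(1)\Rightarrow(3)$, which must manufacture a genuine almost-isometry out of an equality of numerical invariants. A bi-Lipschitz identification of boundaries does not visibly extend to an almost-isometry of the interiors (the analogous extension already fails for general quasi-isometries), so rather than extend a boundary map inward I would invoke Corollary \ref{cor:cat-setting}. It supplies $\lambda_1,\lambda_2\in(0,1]$ with $\max\{\lambda_1,\lambda_2\}=1$ for which $(X,\lambda_1\tilde g_1)$ and $(X,\lambda_2\tilde g_2)$ are almost-isometric. Two observations then pin down the scales: volume entropy is invariant under almost-isometry (for spaces of bounded geometry the exponential growth rate in $R$ of a maximal separated subset of the $R$-ball is unaffected by a bounded additive distortion of distances), while rescaling a metric by $\lambda$ multiplies its entropy by $1/\lambda$. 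Hence $h_1/\lambda_1=h_2/\lambda_2$; since (1) gives $h_1=h_2$ we obtain $\lambda_1=\lambda_2$, and then $\max\{\lambda_1,\lambda_2\}=1$ forces $\lambda_1=\lambda_2=1$. The almost-isometry of Corollary \ref{cor:cat-setting} is thus already one between the unscaled $(X,\tilde g_1)$ and $(X,\tilde g_2)$, which is exactly (3) and closes the cycle $(1)\Rightarrow(3)\Rightarrow(2)\Rightarrow(1)$.

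Finally, although this cycle already suffices, it is worth giving the direct route $(1)\Rightarrow(2)$ that exhibits the role of the main theorem. Since curvature $\leq -1$ makes $M$ locally CAT($-1$), Theorem \ref{thm:main} shows each $(\partial X,\rho_i)$ is a weak quasicircle, and the Falconer--Marsh classification of weak quasicircles up to bi-Lipschitz equivalence by Hausdorff dimension converts the equality $\dim_H(\partial X,\rho_1)=\dim_H(\partial X,\rho_2)$ coming from (1) directly into the desired bi-Lipschitz homeomorphism.
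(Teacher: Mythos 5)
Your proposal is correct, but it routes the equivalence differently from the paper. The paper's proof is two lines: Theorem \ref{thm:main} makes both boundaries weak quasicircles, so Falconer--Marsh (Theorem \ref{thm:falconermarsh}) together with the dictionary $\mathrm{Hdim}=h$ of Remark \ref{rmk:entropy} gives $(1)\Leftrightarrow(2)$ directly, and Bonk--Schramm (Theorem \ref{thm:bongschramm}) is cited in \emph{both} directions to give $(2)\Leftrightarrow(3)$. You instead prove the cycle $(1)\Rightarrow(3)\Rightarrow(2)\Rightarrow(1)$: your $(3)\Rightarrow(2)$ replaces the citation of the easy half of Bonk--Schramm with the direct (and correct) observation that a $(1,C)$-quasi-isometry distorts Gromov products by a bounded additive amount, hence visual metrics by a bounded multiplicative factor; and your $(1)\Rightarrow(3)$ avoids invoking the hard half of Bonk--Schramm explicitly by funneling it through Corollary \ref{cor:cat-setting}, then pinning the scale factors via the identities $h(\lambda d)=h(d)/\lambda$ and almost-isometry invariance of volume entropy, so that $h_1=h_2$ forces $\lambda_1=\lambda_2=1$. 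This is non-circular, since in the paper Corollary \ref{cor:cat-setting} is proved from Theorem \ref{thm:main}, Falconer--Marsh, and Bonk--Schramm without reference to Corollary \ref{cor:lipequiv}; but note that the hard direction of Bonk--Schramm is therefore still present in your argument, just hidden inside \ref{cor:cat-setting}, so nothing is saved in terms of machinery. What your route buys is a self-contained proof of the boundary bi-Lipschitz extension for almost-isometries and a pleasant quantitative remark (equal entropies are exactly the condition that the scaling constants of Corollary \ref{cor:cat-setting} both equal $1$); what it costs is redundancy, since your final paragraph's direct $(1)\Rightarrow(2)$ is precisely the paper's argument and already, combined with the hard half of Bonk--Schramm, yields $(1)\Rightarrow(3)$ without the rescaling detour. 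Two small points you leave tacit but that do hold: the entropy-invariance claim needs bounded geometry, which is automatic for smooth metrics on a closed surface, and the cancellation $h_1/\lambda_1=h_2/\lambda_2\Rightarrow\lambda_1=\lambda_2$ uses $h_i>0$, guaranteed by the curvature bound $\leq -1$.
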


So, after possibly scaling one of the metrics, we can ensure that the universal covers of any two Riemannian surfaces
are almost-isometric. Of course, when scaling, we also change the geometry of the metric, e.g. the area. Our next result
shows that one can arrange for examples with almost-isometric universal covers, while still keeping control of the area
of the surface.

\begin{thmnr} Let $M$ be a closed surface of genus $\geq 2$, and $k\geq 1$ an integer. One can find a $k$-dimensional family
$\mathcal F_k$ of Riemannian metrics on M, all of curvature $\leq -1$, with the following property. If $g, h$ are any two distinct metrics in $\mathcal F_k$, then
\begin{itemize}
\item $Area(M, g) = Area(M, h)$.
\item the lifted metrics $\tilde g, \tilde h$ on the universal cover $X$ are almost-isometric.
\item the lifted metrics $\tilde g, \tilde h$ on the universal cover $X$ are {\bf not} isometric.
\end{itemize}
\label{thm:main2}
\end{thmnr}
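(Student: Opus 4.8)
The plan is to build the family $\mathcal F_k$ by starting from a fixed hyperbolic metric $g_0$ on $M$ and perturbing it within a controlled deformation space, using Corollary \ref{cor:lipequiv} as the central bookkeeping device. Since $g_0$ has constant curvature $-1$, its geodesic flow has topological entropy exactly $1$. The key observation is that Corollary \ref{cor:lipequiv} reduces the almost-isometry condition on universal covers to an equality of topological entropies, which is a single real-valued constraint. So the strategy is to produce a large family of metrics with curvature $\leq -1$, all sharing the same area and the same entropy, but pairwise non-isometric upstairs.

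\medskip

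First I would set up the deformation space. Fix a smooth family of metrics $g_t$, $t \in U \subseteq \mathbb{R}^N$ for some large $N$, obtained by perturbing $g_0$ supported in a small ball (or several disjoint balls) so that curvature stays $\leq -1$; the curvature condition is an open condition, so it survives small perturbations. On this $N$-parameter family I would impose two smooth real-valued constraints: the area functional $\Area(M, g_t)$ and the topological entropy $h_{\mathrm{top}}(g_t)$ of the geodesic flow. Both are smooth functions of $t$ (entropy depends smoothly on the metric for negatively curved metrics, by results of Katok--Knieper--Pollicott--Weiss and the structural stability of Anosov flows). The common level set
\[
\mathcal F_k = \{ g_t : \Area(M,g_t) = \Area(M,g_0),\ h_{\mathrm{top}}(g_t) = 1 \}
\]
is, generically, a submanifold of dimension $N - 2$, which I can make $\geq k$ by taking $N \geq k+2$ and checking that the two constraints are independent (i.e.\ their differentials are linearly independent, so the level set is a genuine submanifold of the expected dimension). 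By Corollary \ref{cor:lipequiv}, any two metrics in $\mathcal F_k$ have almost-isometric universal covers, since they have equal entropy; and they all have equal area by construction.

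\medskip

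The remaining, and hardest, point is to guarantee the non-isometry of the lifts $\tilde g, \tilde h$ for distinct $g, h \in \mathcal F_k$. Equality of entropy and area is precisely what makes the almost-isometry work, so I must ensure the family is not secretly a family of isometric metrics in disguise. The cleanest way is to arrange that distinct parameters give metrics with distinct marked length spectra: by the marked length spectrum rigidity of Otal and Croke, two negatively curved metrics on a surface with the same marked length spectrum are isometric by an isometry isotopic to the identity. So I want the map $t \mapsto (\text{marked length spectrum of } g_t)$ to be injective on $\mathcal F_k$, which I can achieve by choosing the perturbations to move the lengths of a fixed infinite collection of closed geodesics in linearly independent ways, then restricting $U$ to a neighborhood where this injectivity holds. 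This shows the downstairs metrics are pairwise non-isometric; to upgrade to non-isometry of the \emph{universal covers}, I note that an isometry $(X,\tilde g) \to (X,\tilde h)$ would conjugate the two $\Gamma$-actions and descend to an isometry of the quotients after adjusting by an automorphism of $\Gamma$ (using that surface groups have finite outer automorphism realization on closed surfaces, or appealing directly to the Mostow-type rigidity for these boundary actions), contradicting the distinctness of the length spectra. The main obstacle I anticipate is verifying the independence of the area and entropy differentials together with the injectivity of the length-spectrum map on a set of dimension at least $k$; this is where one must do real work, controlling enough geodesics and perturbation directions simultaneously, rather than the formal set-up, which follows cleanly from Corollary \ref{cor:lipequiv}.
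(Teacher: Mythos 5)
Your setup (conformal bump perturbations of a fixed hyperbolic metric, the entropy functional made smooth via Katok--Knieper--Pollicott--Weiss, an entropy level set handled by Corollary \ref{cor:lipequiv}, and Sard's theorem to cut out the equal-area locus) is essentially the paper's architecture. But your treatment of the third bullet --- non-isometry of the \emph{lifts} --- contains a fatal error. You claim that an isometry $(X,\tilde g)\to(X,\tilde h)$ "would conjugate the two $\Gamma$-actions and descend to an isometry of the quotients after adjusting by an automorphism of $\Gamma$." This is false: an isometry of universal covers has no reason to carry one deck group to the other. The basic counterexample is the hyperbolic case itself: any two hyperbolic metrics on $M$ --- with entirely different marked length spectra --- have universal covers both isometric to $\mathbb{H}^2$. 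So injectivity of the marked length spectrum on your family, combined with Croke--Otal rigidity, only shows the \emph{quotient} metrics are pairwise non-isometric; it says nothing about the lifts, because the marked length spectrum is an invariant of the $\Gamma$-equivariant metric, not of the metric space $(X,\tilde g)$. The paper's introduction makes exactly this point: the almost-isometries constructed are "exotic" precisely because no equivariant one can exist (an equivariant almost-isometry would force equal marked length spectra, hence isometric quotients by Croke--Otal), so any argument for non-isometry upstairs must use a non-equivariant invariant.

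The paper's solution is to build a pointwise invariant into the metrics. The conformal factors are chosen so that $g_{\vec t}$ has curvature identically $-2e^{-2t_i}$ on each plateau $V_i$ and identically $-2$ off the supports $U_i$. Since curvature is preserved pointwise by any isometry of the universal covers, the image of a lift $\tilde V_1$ must lie inside a single lift $\tilde U_i$, and an area estimate (the plateaus occupy most of each bump region) forces it to meet $\tilde V_i$, yielding $s_1=t_i$; iterating gives equality of the parameter \emph{multisets} (Lemma \ref{lem:multiset}). Non-isometry of lifts is then arranged by moving along a curve in the entropy level set tangent to a vector with pairwise distinct coordinates summing to zero, so that nearby parameter vectors have pairwise distinct multisets; Sard applied to the area functional on a small neighborhood of such a point produces the $k$-dimensional equal-area family. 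To repair your outline you would need to replace the length-spectrum injectivity step by an isometry invariant of $(X,\tilde g)$ itself, such as this curvature-plateau structure; the rest of your scheme then goes through essentially as in the paper.
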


In the above Theorem \ref{thm:main2}, we think of the almost isometries between the lifted metrics 
on $X$ as being \textit{exotic} since they cannot be realized equivariantly with respect to the two natural $\pi_1(M)$-actions 
on $X$.   Indeed, the existence of a  $\pi_1(M)$-equivariant almost-isometry between the two 
lifted metrics on $X$ implies 
that the two metrics on $M$ have equal marked length spectra (see \cite{KaLaSc}, for example), and are therefore isometric 
by \cite{Cr,Ot}. 

As a final application, we exhibit a {\it gap phenomenon} for the optimal multiplicative constant for quasi-isometries between
certain periodic metrics.

\begin{cornr}
Let $(M_1, d_1), (M_2, d_2)$ be any pair of closed surfaces equipped with locally CAT(-1) metrics,
and assume that their universal covers $(X_i, \tilde d_i)$ are {\bf not} almost-isometric. Then there exists
a constant $\epsilon > 0$ with the property that {\it any} $(C, K)$-quasi-isometry from $(X_1, d_1)$ to $(X_2, d_2)$
must satisfy $C \geq 1+\epsilon$.
\label{cor:gap}
\end{cornr}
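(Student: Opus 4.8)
The plan is to prove Corollary \ref{cor:gap} by contradiction, extracting an almost-isometry as a limit of quasi-isometries whose multiplicative constants approach $1$. Suppose no such $\epsilon$ exists. Then for every $n \geq 1$ there is a quasi-isometry $f_n \colon (X_1, \tilde d_1) \to (X_2, \tilde d_2)$ with multiplicative constant $C_n \leq 1 + \frac{1}{n}$ and some additive constant $K_n$. The goal is to manufacture from this sequence an honest almost-isometry $(X_1, \tilde d_1) \to (X_2, \tilde d_2)$, contradicting the hypothesis that the universal covers are not almost-isometric.

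First I would pass to the induced boundary maps. Since $X_1$ and $X_2$ are proper CAT(-1) spaces, each quasi-isometry $f_n$ extends to a homeomorphism $\partial f_n \colon \partial X_1 \to \partial X_2$ of the boundaries at infinity (this is the standard boundary extension for quasi-isometries of hyperbolic spaces). The key point is to control these boundary maps \emph{quantitatively} in the visual metrics: a $(C_n, K_n)$-quasi-isometry induces a map on the boundary that is quasi-Möbius, or more precisely (by the standard estimates relating Gromov products to quasi-isometry constants) a map whose distortion of the visual metric is governed by the multiplicative constant $C_n$ alone in the following sense. As $C_n \to 1$, the induced boundary maps become bi-Lipschitz with constant tending to $1$, at least after accounting for the normalizing role of the topological entropies (equivalently the Hausdorff dimensions of the visual metrics). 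Here I would invoke Corollary \ref{cor:lipequiv} and the quasicircle machinery of Theorem \ref{thm:main}: both boundaries are weak quasicircles, hence circles equipped with metrics bi-Lipschitz to a snowflake of the standard circle, and bi-Lipschitz equivalence is detected by Hausdorff dimension.

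The heart of the argument is a compactness and limiting step. The boundary $\partial X_1 = \Lambda$ is a compact metric space (a topological circle), and I would argue that the family $\{\partial f_n\}$, suitably normalized, is equicontinuous and uniformly bounded, so by Arzelà--Ascoli a subsequence converges uniformly to a limit map $\phi \colon \partial X_1 \to \partial X_2$. Because the multiplicative constants $C_n$ converge to $1$, the limit $\phi$ is a bi-Lipschitz homeomorphism with multiplicative constant exactly $1$ in the appropriate normalization --- in fact the Hausdorff dimensions of $(\partial X_1, \rho_1)$ and $(\partial X_2, \rho_2)$ must then coincide, since a dimension-changing map cannot arise as a limit of maps with multiplicative constants tending to $1$. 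By the equivalence of (1), (2), and (3) in Corollary \ref{cor:lipequiv} (or its CAT(-1) analogue underlying Corollary \ref{cor:cat-setting}), equality of these Hausdorff dimensions forces $(X_1, \tilde d_1)$ and $(X_2, \tilde d_2)$ to be almost-isometric, contradicting the hypothesis.

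The main obstacle I anticipate is making the limiting step \emph{quantitatively} rigorous: one must show that as $C_n \to 1$ the boundary distortion genuinely collapses to a bi-Lipschitz-with-constant-$1$ phenomenon, rather than merely a quasisymmetric one, and one must control the additive constants $K_n$, which a priori could blow up and destroy equicontinuity. Handling $K_n$ requires the observation that on the boundary the additive constant contributes only a lower-order perturbation to the Gromov-product estimates and washes out in the visual metric, whereas the multiplicative constant survives as the exponent governing metric distortion. Making precise the statement ``$C_n \to 1$ implies the limiting Hausdorff dimensions agree'' --- and thus that almost-isometry is a \emph{closed} condition among such quasi-isometries --- is where the quasicircle structure from Theorem \ref{thm:main} does the essential work, since it reduces the comparison of the two fractal boundaries to the single numerical invariant of Hausdorff dimension.
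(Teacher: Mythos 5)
Your proposal contains a genuine gap, and it sits exactly where you flagged your ``main obstacle'': the compactness-and-limit step does not work, and the one assertion that would rescue it is precisely the unproven quantitative ingredient. A $(C_n,K_n)$-quasi-isometry $f_n$ induces a boundary homeomorphism satisfying the two-sided estimate
$e^{-K_n'}\,\rho_1(x,y)^{C_n} \leq \rho_2\bigl(\partial f_n(x),\partial f_n(y)\bigr) \leq e^{K_n'}\,\rho_1(x,y)^{1/C_n}$,
where $K_n'$ depends on $K_n$ (and on $C_n$ and the hyperbolicity constant). The additive constant does \emph{not} ``wash out in the visual metric'': it exponentiates into the multiplicative H\"older constant $e^{K_n'}$. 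Since the $K_n$ are a priori unbounded, the family $\{\partial f_n\}$ has no uniform modulus of continuity, so Arzel\`a--Ascoli does not apply. Worse, even if a uniform limit $\phi$ existed, your claim that it is bi-Lipschitz is false as a general principle: a uniform limit of homeomorphisms that are H\"older with exponents tending to $1$ but unbounded constants need not be Lipschitz --- take every $f_n$ equal to one fixed homeomorphism that is $\alpha$-H\"older for every $\alpha<1$ but not Lipschitz (modelled on $x\mapsto x\log(1/x)$ near $0$). So the sentence ``a dimension-changing map cannot arise as a limit of maps with multiplicative constants tending to $1$'' is asserted, never proved, and is in fact the entire content of the corollary.

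The repair --- which is the paper's actual proof --- shows the limiting apparatus is superfluous. From the upper H\"older bound, each \emph{individual} $(C,K)$-quasi-isometry already forces $\textrm{Hdim}(\partial X_2,\rho_2)\leq C\cdot\textrm{Hdim}(\partial X_1,\rho_1)$, since a $1/C$-H\"older surjection increases Hausdorff dimension by at most the factor $C$; no limit map is needed, and the additive constant only enters through the harmless factor $e^{K'}$. By Theorem \ref{thm:main} together with Theorems \ref{thm:falconermarsh} and \ref{thm:bongschramm}, the hypothesis that the universal covers are not almost-isometric means the two boundaries have \emph{distinct} Hausdorff dimensions; ordering them so that $\textrm{Hdim}(\partial X_1,\rho_1)<\textrm{Hdim}(\partial X_2,\rho_2)$, any quasi-isometry then satisfies $C\geq \textrm{Hdim}(\partial X_2,\rho_2)/\textrm{Hdim}(\partial X_1,\rho_1)>1$, which gives an \emph{explicit} $\epsilon$. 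Note that once the per-map dimension inequality is in hand, your contradiction scheme also closes immediately ($\textrm{Hdim}_2\leq C_n\,\textrm{Hdim}_1$ for all $n$ yields $\textrm{Hdim}_2\leq\textrm{Hdim}_1$), but it has become a detour that discards the explicit gap constant. The missing idea in your write-up is this per-map H\"older/dimension estimate; the Arzel\`a--Ascoli step is both broken and unnecessary.
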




The layout of this paper is as follows. In Section \ref{sec:defn}, we review the basic definitions, and summarize
the results from the literature that we will need. We also show how to deduce Corollaries \ref{cor:cat-setting}, 
\ref{cor:lipequiv}, and \ref{cor:gap} from Theorem \ref{thm:main}. In Section \ref{sec:main}, we give a proof of 
Theorem \ref{thm:main}. Section \ref{sec:main2} is devoted to the proof of Theorem \ref{thm:main2}. Finally, 
we provide some concluding remarks in Section \ref{sec:conclusion}.

\subsection*{Acknowledgments:} We are pleased to thank Ralf Spatzier for helpful discussions. We would also like to 
thank Xiangdong Xie for informing us of Bonk and Schramm's work. Part of this work was completed during a collaborative
visit of the third author to Ohio State University (OSU), which was partially funded by the Mathematics Research Institute at OSU.


\section{Background material}\label{sec:defn}

In this section, we review some basic definitions we will need, and also provide descriptions
of some results we will need in our proofs.

\subsection{Convex cocompact actions} \label{sec:convcocmpct} 
We briefly summarize the statements we need concerning convex cocompact 
actions, and refer the reader to \cite[Section 1.8]{Bou} for more details. Given a properly
discontinuous isometric action of $\Gamma$ on a proper CAT(-1) space $X$, we have an associated {\it limit set} in 
the boundary at infinity $\partial X$. This set $\Lambda_\Gamma$ is obtained by taking the closure $\overline{\Gamma \cdot p}$ 
of the $\Gamma$-orbit of a point $p\in X$ inside the compactification $\overline{X}:= X \cup \partial X$, and setting
$\Lambda_\Gamma:= \overline{\Gamma \cdot p} \cap \partial X$. 

\begin{dfnnr}
The $\Gamma$-action on $X$ is {\it convex cocompact} if it satisfies any of the following equivalent conditions:
\begin{enumerate}
\item the map $\Phi: \Gamma \rightarrow X$ given by $\Phi(g) := g(x)$ is quasi-isometric.
\item the orbit of a point $\Gamma\cdot p$ is a quasi-convex subset of $X$ (i.e. every geodesic joining a pair of points in 
$\Gamma\cdot p$ lies within a uniform neighborhood of $\Gamma\cdot p$).
\item the action of $\Gamma$ on the convex hull of its limit set $\textrm{Co}(\Lambda_\Gamma)$ is cocompact.

\end{enumerate}
\end{dfnnr}

The equivalence of statements (1) and (2) can be found in \cite[Corollary 1.8.4]{Bou}, while the equivalence of (2) and
(3) is shown in \cite[Proposition 1.8.6]{Bou}. It follows from these conditions that $\Gamma$ is $\delta$-hyperbolic, and therefore we can compare the boundary $\partial X$ with the Gromov boundary $\partial \Gamma$. For our purposes, an important consequence of the action being convex cocompact is that the limit set $\Lambda_\Gamma$ is homeomorphic to $\partial\Gamma$. So in the special case where $\Gamma$ is a surface group, the limit
set $\Lambda_\Gamma$ is homeomorphic to $S^1$.

\subsection{Metrics on the boundary}
We refer the reader to \cite{Bou} for more details concerning this subsection.

Let $X$ be a CAT(-1) space with boundary at infinity $\partial X$.  Fix a basepoint $w \in X$.  The Gromov product 
$(\cdot \vert \cdot)_w:X \times X \rightarrow \mathbb{R}$ is defined by 
$$(p\vert q)_w:=\frac{1}{2}\left(d(w, p) + d(w, q) - d(p, q) \right)$$ 
for each $x,y \in X$, and extends to $\partial X \times \partial X$ by 
$$(x,y)_w:=\lim_{n \rightarrow \infty} (x_n \vert y_n)_w$$
where $\{x_n\}$ and $\{y_n\}$ are sequences in $X$ converging to $x$ and $y$.  
Gromov products induce a (family of) canonical visual metric(s) on the boundary $\partial X$ defined as follows.
\begin{dfnnr} Fix a basepoint $w\in X$.  Then the metric $d_w$ is defined by
$$d_w(x, y) := e^{-(x|y)_w}$$
This gives a family of bi-Lipschitz equivalent metrics, obtained by varying the choice of the basepoint $w$. By an abuse
of language, we will refer to this bi-Lipschitz class of metrics on $\partial X$ as {\it the canonical visual metric}.
\label{dfn:vismetric}
\end{dfnnr}


\begin{rmknr} 
If we let $\gamma_{xy}$ denote the geodesic joining $x$ and $y$, there is a universal constant $C$ with the property that,
for all pairs of points $x, y\in \partial X$, 
$$\left| (x|y)_w - d(w, \gamma_{xy})\right| < C.$$
It follows that the distance function $d_w$ is bi-Lipschitz equivalent to the function $\hat d_w$ defined via
$$\hat d_w(x, y) := e^{-d(w, \gamma_{xy})},$$
(even though $\hat d_w$ might not define a metric). 
\label{rmk:alternate}
\end{rmknr}

Let $\Gamma$ be a properly discontinuous group of isometries of $X$ acting convex cocompactly. Let $\Lambda\subseteq \partial X$ be 
the limit set of $\partial X$, $s=\textrm{Hdim}(\partial X)$ be the Hausdorff dimension of the canonical visual metric on 
$\Lambda$, and $H^{s}$ be the corresponding Hausdorff measure.  Then $H^{s}$ is a finite measure, and fully supported 
on $\Lambda$. The Hausdorff dimension and measure can be estimated
using the following result of Bourdon \cite[Theorem 2.7.5]{Bou}.

\begin{thmnr}[Bourdon]\text{}

\begin{enumerate}[(i)]
\item $s=\displaystyle\varlimsup_{n \to \infty} \frac{1}{n} \log \#\{\gamma \in \Gamma\, \vert\, d(w,\gamma w)\leq n\}$\\

\item There exists a constant $C \geq 1$ such that for each metric ball $B(x,r)$ in $\partial X$ (with center $x$
and radius $r$),
$$C^{-1}r^{s}\leq H^{s}\left(B(x,r)\right)\leq Cr^s.$$

\end{enumerate}
\label{thm:bourdon}\end{thmnr}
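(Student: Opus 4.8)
The plan is to identify the Hausdorff measure $H^s$ with a Patterson--Sullivan conformal density, for which both claims become standard consequences of Sullivan's shadow lemma. First I would introduce the Poincar\'e series $P_\sigma(w) = \sum_{\gamma \in \Gamma} e^{-\sigma\, d(w, \gamma w)}$ and let $\delta_\Gamma$ denote its abscissa of convergence. By the Cauchy--Hadamard formula applied to the associated Dirichlet series (grouping orbit points $\gamma$ according to the integer part of $d(w,\gamma w)$), this abscissa is exactly the exponential growth rate
\[
\delta_\Gamma = \varlimsup_{n \to \infty} \frac{1}{n} \log \#\{\gamma \in \Gamma : d(w,\gamma w)\leq n\},
\]
so proving (i) amounts to showing $s = \delta_\Gamma$.

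Next I would construct the Patterson--Sullivan measure: a $\Gamma$-equivariant conformal density $\{\mu_x\}_{x \in X}$ of dimension $\delta_\Gamma$, supported on $\Lambda$, satisfying $\frac{d\mu_x}{d\mu_y}(\xi) = e^{-\delta_\Gamma \beta_\xi(x,y)}$ where $\beta_\xi$ is the Busemann cocycle. When $P_\sigma$ diverges at $\sigma = \delta_\Gamma$ this follows by taking weak-$*$ limits as $\sigma \downarrow \delta_\Gamma$ of the normalized orbital measures $\frac{1}{P_\sigma(w)} \sum_{\gamma} e^{-\sigma d(w,\gamma w)} \mathrm{D}_{\gamma w}$, with $\mathrm{D}_{\gamma w}$ the Dirac mass at $\gamma w$; the convergent case is handled by Patterson's slowly varying reweighting trick. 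Fixing the basepoint and writing $\mu := \mu_w$, the technical heart is Sullivan's shadow lemma: letting $\mathcal{O}_R(\gamma w) \subseteq \partial X$ denote the shadow cast on the boundary by the ball $B(\gamma w, R)$ as seen from $w$, there is a constant so that
\[
C^{-1} e^{-\delta_\Gamma d(w,\gamma w)} \leq \mu\big(\mathcal{O}_R(\gamma w)\big) \leq C\, e^{-\delta_\Gamma d(w,\gamma w)}
\]
for all $\gamma \in \Gamma$ and all sufficiently large $R$.

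To convert this into the ball estimate (ii), I would use CAT(-1) geometry to compare shadows with visual balls. By the comparison recorded in Remark \ref{rmk:alternate}, any two points $\xi, \eta$ in a shadow $\mathcal{O}_R(\gamma w)$ have Gromov product $(\xi|\eta)_w$ comparable to $d(w, \gamma w)$, so each such shadow is sandwiched between two $d_w$-balls of radius comparable to $e^{-d(w,\gamma w)}$ centered at a point of $\Lambda$ in the direction of $\gamma w$. Combined with the shadow lemma, this yields Ahlfors regularity $C^{-1} r^{\delta_\Gamma} \leq \mu(B(\xi, r)) \leq C r^{\delta_\Gamma}$ for all $\xi \in \Lambda$ and all small $r$. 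Ahlfors $\delta_\Gamma$-regularity then forces $\mathrm{Hdim}(\Lambda) = \delta_\Gamma$ and $\mu \asymp H^{\delta_\Gamma}$ via the mass distribution principle, establishing both (i) and (ii); finiteness and full support of $H^s$ follow from compactness of $\Lambda$ and strict positivity of $\mu$ on boundary balls.

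The main obstacle is the shadow lemma, and within it the uniform lower bound on $\mu\big(\mathcal{O}_R(\gamma w)\big)$. This is precisely where convex cocompactness is indispensable: cocompactness of the $\Gamma$-action on $\mathrm{Co}(\Lambda)$ guarantees that orbit points are coarsely dense in the convex hull and that shadows of a fixed radius $R$ have uniformly comparable shape and conformal distortion, so that their $\mu$-mass cannot degenerate. Such uniformity genuinely fails in the presence of cusps or infinite-volume ends, which is why the hypothesis is essential rather than cosmetic.
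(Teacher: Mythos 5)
The paper offers no proof of this statement: it is imported verbatim from Bourdon \cite[Th\'eor\`eme 2.7.5]{Bou}, and your sketch reconstructs essentially the argument Bourdon himself gives --- Patterson--Sullivan density at the critical exponent, Sullivan's shadow lemma, comparison of shadows with visual balls, then the mass distribution principle to identify $\mu$ with $H^s$ up to bounded ratios --- so the route is the right one and the outline is sound (the Patterson reweighting trick is even superfluous here, since convex cocompact groups on CAT(-1) spaces are divergent). One point to repair: the shadow lemma, including its uniform lower bound, holds for \emph{every} non-elementary discrete group and is not where convex cocompactness enters; with cusps the shadow lemma survives intact. What convex cocompactness genuinely buys is the converse half of your shadows-to-balls conversion: coarse density of the orbit $\Gamma w$ in $\textrm{Co}(\Lambda)$ makes every limit point uniformly conical, so that for each $\xi \in \Lambda$ and each small $r$ there is an orbit point $\gamma w$ within bounded distance of the ray $[w,\xi)$ with $d(w,\gamma w)$ within an additive constant of $\log(1/r)$, whose shadow is then comparable to $B(\xi,r)$. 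Your sketch only records the easy direction (shadows are trapped between visual balls), and it is exactly this uniform conicality --- which you correctly sense comes from cocompactness on the convex hull, but misattribute to the shadow lemma --- that fails for groups with cusps and destroys the Ahlfors regularity in (ii) there, as Sullivan's global measure formula shows.
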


\begin{rmknr}

If $(M,g)$ is a closed Riemannian manifold with sectional curvatures $\leq -1$, 
then its universal Riemannian covering $X$ is a CAT(-1) space equipped with a geometric action of 
$\Gamma=\pi_1(M)$ by deck transformations.  In this case, $s=h(g)$, where the latter denotes the 
topological entropy of the geodesic flow on the unit tangent bundle $T^{1}(M)$.

To see this, let $W$ denote a bounded fundamental domain for the $\Gamma$-action, $V=\vol(W)$, 
and $D=\diam(W)$. The following basic estimates
$$ \vol(B(w,n))\leq V\cdot \#\{\gamma \in \Gamma\, \vert\, d(w,\gamma w)\leq n+D\}$$ and 
$$V\cdot \#\{\gamma \in \Gamma\, \vert\, d(w,\gamma w)\leq n\}\leq \vol(B(w, n+D))$$
imply that the Hausdorff dimension $s$ and the volume growth entropy $$h_{vol}(g):=\lim_{r \to \infty} \frac{1}{r} \log \vol(B(w,r))$$ coincide.  Then by Manning's theorem \cite{Ma}, $s=h_{\vol}(g)=h(g)$. 
\label{rmk:entropy}\end{rmknr}

\subsection{(Weak) Quasicircles according to Falconer-Marsh}

Next let us briefly review some notions and results of Falconer and Marsh \cite{falconer-marsh}.

\begin{dfnnr}[Falconer-Marsh] A metric space $(C,d)$ is a \emph{quasicircle} if
	\begin{enumerate}[(i)]
		\item $C$ is homeomorphic to $S^1$,
		\item (expanding similarities) There exist $a,b,r_0>0$ with the following property. For any $r<r_0$ and $N\subseteq C$ 			with $\diam(N)=r$, there exists an expanding map $f:N\rightarrow C$ with expansion coefficient between $\frac{a}{r}$ 			and $\frac{b}{r}$, i.e. for all distinct $x, y\in N$, we have the estimate
		$$\frac{a}{r}\leq  \frac{d(f(x), f(y))}{d(x,y)} \leq \frac{b}{r}$$
		Note that $a, b$ are independent of the size of $N$, and of the choice of points in $N$.
		\item (contracting similarities) There exist $c, r_1>0$ with the following property. For any $r<r_1$ and ball $B\subseteq 			C$ with radius $r$, there exists a map $f:C\rightarrow B\cap C$ contracting no more than a factor $cr$.
	\end{enumerate}
\label{dfn:qcircle}
\end{dfnnr}

Let $s=\textrm{Hdim}(C)$ be the Hausdorff dimension of $C$, and $H^s$ the corresponding Hausdorff measure. The following alternate property to (iii) is implied by (ii) and (iii):

\begin{enumerate}
		\item[(iiia)] For any open $U\subseteq C$ one has 
		$0<H^s(U)<\infty$, and $H^s(U)\rightarrow 0$ as $\diam(U)\rightarrow 0$.
	\end{enumerate}

The main result of Falconer-Marsh \cite{falconer-marsh} is that two quasicircles $C_1$ and $C_2$ are bi-Lipschitz equivalent if and only if their Hausdorff dimensions are equal.  While stated this way, their proof only uses conditions (i), (ii), and (iiia).  For this reason, we will say that a metric space $(C,d)$ is a \textit{weak quasicircle} when conditions (i), (ii), and (iiia) are satisfied.




\begin{thmnr}[Falconer-Marsh] Two weak quasicircles $C_1$ and $C_2$ are bi-Lipschitz equivalent if and only if their Hausdorff dimensions are equal.
\label{thm:falconermarsh}\end{thmnr}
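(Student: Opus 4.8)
The plan is to prove the two implications of the equivalence separately, the forward direction being formal and the converse carrying all of the geometric content. First I would dispose of the easy direction: if $f\colon C_1\to C_2$ is an $L$-bi-Lipschitz homeomorphism, then for every $E\subseteq C_1$ and every exponent $t$ one has $L^{-t}H^t(E)\le H^t(f(E))\le L^tH^t(E)$, so $H^t(C_1)$ is positive and finite for exactly the same exponents $t$ as $H^t(C_2)$; applying this at the critical exponent gives $\operatorname{Hdim}(C_1)=\operatorname{Hdim}(C_2)$. This uses only the definition of Hausdorff measure and none of the quasicircle axioms.

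For the converse, assume $\operatorname{Hdim}(C_1)=\operatorname{Hdim}(C_2)=s$. The strategy is to manufacture an explicit bi-Lipschitz homeomorphism by matching the two circles through their normalized $s$-dimensional Hausdorff measures. The decisive preliminary is to upgrade the qualitative hypothesis (iiia) to quantitative Ahlfors $s$-regularity, namely a constant $C\ge 1$ with
\[
C^{-1}r^s\le H^s\!\left(B(x,r)\right)\le Cr^s\qquad\text{for all }x\in C_i,\ 0<r\le\diam(C_i).
\]
The upper bound I would extract directly from the expanding-similarities axiom (ii): applied to $N=B(x,r)$ it yields a map into $C_i$ expanding distances by a factor $\gtrsim 1/r$, so its inverse is Lipschitz of constant $\lesssim r$ and hence $H^s(B(x,r))\lesssim r^sH^s(C_i)$, finite by (iiia). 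The lower bound is more delicate: using that the same map is Lipschitz of constant $\lesssim 1/r$ and, by invariance of domain on the $1$-manifold $C_i$, sends the open ball onto a relatively open subset of definite diameter, one reduces to a uniform lower bound on the $H^s$-measure of macroscopic open sets, obtained from (iiia) together with the compactness of $C_i$ and a further application of (ii). I expect this regularity statement, which is in essence Falconer's implicit theorem for quasi-self-similar sets, to be the main obstacle of the whole argument.

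Granting Ahlfors regularity, normalize $\mu_i:=H^s/H^s(C_i)$ to a probability measure. By regularity $\mu_i$ is non-atomic (points have measure zero since $s>0$) and, by (iiia), of full support, so after fixing basepoints and orientations the cumulative-measure maps $h_i\colon C_i\to\bbR/\bbZ$, sending a point to the $\mu_i$-mass of the positively oriented arc from the basepoint, are homeomorphisms. The candidate map is $\phi:=h_2^{-1}\circ h_1\colon C_1\to C_2$; by construction it carries arcs to arcs and equates the normalized measures of corresponding arcs. To conclude that $\phi$ is bi-Lipschitz it then suffices to prove, for each $i$, the comparison
\[
d_i(x,y)^s\;\asymp\;\mu_i(A_{xy}),
\]
where $A_{xy}$ denotes the shorter arc between $x$ and $y$, for then $d_1(x,y)^s\asymp\mu_1(A_{xy})=\mu_2(\phi(A_{xy}))\asymp d_2(\phi x,\phi y)^s$, the desired two-sided Lipschitz estimate (for nearby points $\phi(A_{xy})$ is again the shorter arc, and a compactness argument handles distant pairs).

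The comparison $d_i(x,y)^s\asymp\mu_i(A_{xy})$ is the second place where the geometry is indispensable, and it rests on bounded turning: the shorter arc $A_{xy}$ must have diameter comparable to $d_i(x,y)$. Granting this, the upper estimate follows by enclosing $A_{xy}$ in a ball of radius $\asymp d_i(x,y)$ and invoking the upper regularity bound, while the lower estimate follows by locating within $A_{xy}$ a sub-arc of diameter $\asymp d_i(x,y)$ sitting inside a comparable ball and invoking the lower regularity bound. I would derive bounded turning from axioms (i) and (ii) together with the regularity already established, using the expanding similarities to preclude arcs that are metrically short yet spatially wide; alongside the regularity step this is the part I expect to require the most care. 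Assembling these estimates shows that $\phi$ distorts distances by a factor bounded independently of scale, which completes the converse and the proof.
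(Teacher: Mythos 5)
This statement is not proved in the paper at all: it is quoted from Falconer--Marsh \cite{falconer-marsh}, the paper's only added content being the remark that the original argument uses just conditions (i), (ii), and (iiia). So the relevant comparison is with the original Falconer--Marsh proof, and your outline is essentially a reconstruction of it: the easy direction via the scaling $L^{-t}H^t(E)\le H^t(f(E))\le L^tH^t(E)$; the converse via the measure-parametrization homeomorphisms $h_i$ and the two-sided comparison $d_i(x,y)^s\asymp H^s(A_{xy})$, with mass bounds and bounded turning extracted from the expanding-similarities axiom. You are also right, in line with the paper's ``weak'' hypothesis, that the lower mass bounds must come from (ii) and (iiia) rather than from the contracting axiom (iii).

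Two of your one-line justifications fail as literally stated, though both are repairable with tools you already have in hand. First, there is no uniform lower bound on $H^s(U)$ for \emph{open} sets of definite diameter: $U$ could be two tiny arcs far apart. Likewise $B(x,r)\cap C$ need not be connected, so its image under the expanding map need not be either. The fix is to pass to the connected component of $x$ in the open ball, which is an open arc of diameter $\ge r$ (its endpoints must lie at distance $\ge r$ from $x$); its image is then an \emph{arc} of diameter $\ge a$, and the uniform lower bound you need is for arcs of definite diameter, which does follow from compactness of $C\times C$ together with continuity of the arc-measure function (non-atomicity and continuity come from the upper regularity bound you prove first, positivity from (iiia)). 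Second, in the comparison $\mu_i(A_{xy})\gtrsim d_i(x,y)^s$, placing a sub-arc ``inside a comparable ball and invoking the lower regularity bound'' proves nothing: containment in a ball yields upper bounds, not lower ones. What you need is the arc-level lower bound $H^s(J)\ge c\,\bigl(\diam J\bigr)^s$, proved by the same mechanism as your ball bound (expand $J$ by (ii) to an arc of diameter $\ge a$ and invoke the uniform arc bound above); the ball version is then superfluous for this step. Finally, your plan for bounded turning is viable from (i) and (ii) alone: if both arcs joining $x,y$ had diameter $\ge M\,d(x,y)$ with $M$ large, expanding a set containing initial segments of \emph{both} complementary arcs produces a pair of points at distance $\lesssim b/M$ both of whose complementary arcs have diameter bounded below by a fixed constant, and a compactness argument on the topological circle rules this out. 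With these repairs your proposal goes through and coincides in architecture with the cited proof.
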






\subsection{Almost-isometries and the work of Bonk-Schramm}

Now let us recall some results of Bonk and Schramm \cite{BS} that we will need.

\begin{dfnnr}
A map between $f: X\rightarrow Y$ between metric spaces $(X, d_X)$ and $(Y, d_Y)$ is \emph{quasi-isometric} if there exists
constants $C, K$ such that 
$$ \frac{1}{C}d_X(x,y) -K \leq d_Y\left(f(x), f(y)\right) \leq C d_X(x, y) + K.$$
A map $f: X\rightarrow Y$ is {\it coarsely onto} if $Y$ lies in a bounded neighborhood of $f(X)$.
If the quasi-isometric map $f$ is coarsely onto, then we call it a {\it quasi-isometry}, and we say that
$X, Y$ are {\it quasi-isometric}. A map is almost-isometric if it is quasi-isometric with multiplicative constant $C=1$. An
almost-isometric map $f: X\rightarrow Y$ which is coarsely onto is called an {\it almost-isometry}, in which case we say that 
$X,Y$ are {\it almost-isometric}.
\end{dfnnr}

Special cases of the results in \cite{BS} relate the existence of almost-isometries with metric properties of $\partial X$, as follows.

\begin{thmnr}[Bonk-Schramm] 
Let $X, Y$ be a pair of CAT(-1) spaces. Then
$X, Y$ are almost-isometric if and only if the canonical visual metrics on the boundaries $\partial X$, $\partial Y$ are
bi-Lipschitz homeomorphic to each other.
\label{thm:bongschramm}
\end{thmnr}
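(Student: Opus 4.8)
The plan is to prove the two implications of Theorem~\ref{thm:bongschramm} separately. The forward direction (almost-isometry $\Rightarrow$ bi-Lipschitz boundaries) is elementary and rests on the observation that an almost-isometry distorts Gromov products only by a bounded \emph{additive} amount. The reverse direction (bi-Lipschitz boundaries $\Rightarrow$ almost-isometry) is the substantial one, and I would obtain it by reconstructing each space, up to almost-isometry, from its boundary via the hyperbolic cone construction $\Con(\cdot)$, and then transporting the boundary map across.

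For the forward direction, suppose $f\colon X\to Y$ is an almost-isometry, so that $|d_Y(f(p),f(q)) - d_X(p,q)|\le K$ for all $p,q$ and $f$ is coarsely onto. Fixing a basepoint $w\in X$ and writing out the Gromov product, the three defining distances $d(w,p)$, $d(w,q)$, $d(p,q)$ each change by at most $K$, so
$$\left| (f(p)\mid f(q))_{f(w)} - (p\mid q)_w\right| \le \tfrac{3}{2}K.$$
Since $X,Y$ are geodesic and Gromov hyperbolic, $f$ extends to a homeomorphism $\partial f\colon \partial X\to \partial Y$, and passing to the limit (at the cost of an extra additive error controlled by the hyperbolicity constant) the same estimate holds for Gromov products of boundary points. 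Exponentiating, the ratio
$$\frac{d_{f(w)}(\partial f(x),\partial f(y))}{d_w(x,y)} = e^{(x\mid y)_w - (\partial f(x)\mid\partial f(y))_{f(w)}}$$
is pinched between $e^{-(3K/2+c)}$ and $e^{3K/2+c}$ for a constant $c$. As visual metrics based at different points are bi-Lipschitz equivalent, this exhibits $\partial f$ as a bi-Lipschitz homeomorphism.

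For the reverse direction, let $\phi\colon \partial X\to\partial Y$ be bi-Lipschitz with constant $L$. I would first invoke the Bonk--Schramm hyperbolic cone: to a bounded metric space $Z$ one associates a Gromov hyperbolic space $\Con(Z)$ whose boundary, with its visual metric, is bi-Lipschitz to $Z$. The two key inputs are then (a) that each CAT(-1) space $X$ is itself almost-isometric to $\Con(\partial X)$ --- a \emph{filling} statement asserting that a visual hyperbolic space is recovered, up to almost-isometry, from the cone over its boundary --- and (b) that a bi-Lipschitz boundary map induces an almost-isometry of cones. For (b), one defines the induced map on $\Con(\partial X)$ essentially by $\phi$ in the boundary coordinate and the identity in the radial coordinate; because the cone metric depends on the boundary metric only through its logarithm, the bounded \emph{multiplicative} distortion $L$ of $\phi$ becomes a bounded \emph{additive} distortion $\log L$ of the cone metric, i.e. the induced map has multiplicative constant $1$. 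Composing the almost-isometries $X\simeq\Con(\partial X)\to\Con(\partial Y)\simeq Y$ from (a) and (b) yields the desired almost-isometry $X\to Y$.

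The main obstacle is the filling step (a): showing that a CAT(-1) space is almost-isometric, not merely quasi-isometric, to the cone over its boundary. This is exactly where the exponential form of the visual metric must be exploited, and where one needs a visuality hypothesis --- every point of $X$ should lie within bounded distance of a bi-infinite geodesic with both endpoints in $\partial X$ --- so that the radial and angular coordinates on $X$ are genuinely controlled by boundary data. In the convex cocompact setting of this paper this visuality is automatic, since $X$ may be replaced by the convex hull of the limit set; for a general CAT(-1) space one restricts to this visual core. Keeping careful track of additive versus multiplicative errors throughout, so that no step inflates the multiplicative constant above $1$, is the delicate bookkeeping that makes this the heart of the argument.
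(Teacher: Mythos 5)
Your proposal is correct and takes essentially the same route as the paper, which likewise gets the forward direction from the bounded additive distortion of Gromov products (the proof of Theorem 6.5 in \cite{BS}, specialized to multiplicative constant $1$) and the converse by composing $X \simeq \Con(\partial X) \rightarrow \Con(\partial Y) \simeq Y$ using \cite[Theorem 7.4]{BS} together with the comment following \cite[Theorem 8.2]{BS} in the case $a=1$. You even flag the same visuality hypothesis needed for the filling step $X \simeq \Con(\partial X)$ that the paper disposes of by noting the cited results ``apply in our context,'' where the spaces are convex hulls of limit sets.
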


The fact that an almost-isometry between $X$ and $Y$ induces a bi-Lipschitz homeomorphism between the 
boundaries $\partial X$ and $\partial Y$ appears in \cite[proof of Theorem 6.5]{BS} -- where it should be noted 
that, in the notation of their proof, our more restrictive context corresponds to $\epsilon=\epsilon'=1$ and $\lambda=1$. 
As for the converse, the interested reader should consult \cite[Theorem 7.4]{BS} to see that a bi-Lipschitz map between 
boundaries $\partial X$ and $\partial Y$ induces an almost-isometry between the metric spaces $\Con(X)$ and $\Con(Y)$. 
The comment following \cite[Theorem 8.2]{BS} applies in our context where $a=1$, so that $\Con(X)$ and $\Con(Y)$ are 
almost isometric to $X$ and $Y$, respectively, whence $X$ and $Y$ are almost-isometric.

\subsection{Proof of Corollaries \ref{cor:cat-setting}, \ref{cor:lipequiv}, and \ref{cor:gap}}

The proof of all three corollaries are now completely straightforward.

\begin{proof}[Proof of Corollary \ref{cor:lipequiv}]
Theorem \ref{thm:main} gives us that 
$(\partial X, \rho_1)$ and $(\partial X, \rho_2)$ are weak quasicircles. Then Falconer and Marsh's Theorem \ref{thm:falconermarsh} and Remark \ref{rmk:entropy} gives the equivalence of statements (1) and (2), while Bonk and Schramm's Theorem \ref{thm:bongschramm} gives the
equivalence of statements (2) and (3).
\end{proof}

\begin{proof}[Proof of Corollary \ref{cor:cat-setting}]
When two metrics on $X$ are related by a scale factor $\lambda$, it easily follows from the formula for the canonical visual 
metric that the Hausdorff dimension of the boundary at infinity scales by $1/\lambda$. Corollary \ref{cor:cat-setting} immediately
follows by combining our Theorem \ref{thm:main}, Falconer and Marsh's Theorem \ref{thm:falconermarsh}, and Bonk and Schramm's Theorem \ref{thm:bongschramm}.
\end{proof}

\begin{proof}[Proof of Corollary \ref{cor:gap}]
Combining our Theorem  \ref{thm:main}, Falconer and Marsh's Theorem \ref{thm:falconermarsh}, and Bonk 
and Schramm's Theorem \ref{thm:bongschramm}, we see that the boundaries at infinity $(\partial X_i, \rho_i)$ must have 
distinct Hausdorff dimensions. Without loss of generality, let us assume 
$\textrm{Hdim}(\partial X_1, \rho_1) < \textrm{Hdim}(\partial X_2, \rho_2)$. 
If $\phi: X_1\rightarrow X_2$ is a $(C, K)$-quasi-isometry, we
want to obtain a lower bound on $C$. But the quasi-isometry induces a homeomorphism $\partial \phi : \partial X_1 \rightarrow
\partial X_2$ which, from the definition of the canonical visual metrics, has the property that
$$\rho_2\left(\partial \phi(x), \partial \phi(y) \right) \leq e^K \cdot \rho_1(x, y) ^C$$
for all $x, y\in \partial X_1$. It easily follows that $\textrm{Hdim}(\partial X_2, \rho_2)\leq C\cdot \textrm{Hdim}(\partial X_1, \rho_1)$, 
giving us the desired inequality
$$1 < \frac{\textrm{Hdim}(\partial X_2, \rho_2)}{\textrm{Hdim}(\partial X_1, \rho_1)} \leq C.$$
\end{proof}


\section{Surface boundaries are (weak) quasicircles} \label{sec:main}

This section is devoted to the proof of Theorem \ref{thm:main}. Looking at Definition \ref{dfn:qcircle}, property (i) is 
obvious (see the discussion in Section \ref{sec:convcocmpct}), while property (iiia) is an immediate consequence of Theorem \ref{thm:bourdon}.
It remains to establish property (ii).  

Let $\Gamma$ be a surface group, and let $(X,d)$ be a CAT(-1) space. Suppose $\Gamma$ acts on $X$ isometrically, properly discontinuously, and convex cocompactly. Let $\Lambda\subseteq \partial X$ be the limit set of $\Gamma$ and write $Y:=\textrm{Co}(\Lambda)$ for the convex hull of $\Lambda$. 

\begin{lemnr} There exists $K>0$ such that the following holds. Let $\eta$ be any geodesic in $X$ with endpoints in $\Lambda$ (and hence $\eta \subset Y$) and let $N_K(\eta)$ be the $K$-neighborhood of $\eta$. Then $N_K(\eta)\cap Y$ separates $Y$.
\label{lem:separation}
\end{lemnr}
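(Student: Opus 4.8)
The plan is to show that the two endpoints $\xi_-,\xi_+$ of $\eta$ cut the circle $\Lambda\cong S^1$ into two arcs $A_-,A_+$, and that $N_K(\eta)\cap Y$ separates $Y$ into pieces accumulating on the two arcs. The essential difficulty is that an intrinsic CAT$(-1)$ argument cannot work: a complete geodesic in a general proper CAT$(-1)$ space need not separate it (think of a CAT$(-1)$ $2$-complex with several pages). The surface-group hypothesis must therefore be used, and the way I would bring it in is to import the (genuine) separation of $\bbH^2$ by a geodesic across a quasi-isometry, converting it into a coarse separation of $Y$.

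First I would fix the quasi-isometric model. Since the action is convex cocompact, condition (3) in the definition of convex cocompactness says $\Gamma$ acts geometrically on $Y=\mathrm{Co}(\Lambda)$, so by the \v{S}varc--Milnor lemma $Y$ is quasi-isometric to $\Gamma$, which (being a closed hyperbolic surface group) is quasi-isometric to $\bbH^2$. Fix a quasi-isometry $\Phi\colon\bbH^2\to Y$ and a quasi-inverse $\Psi\colon Y\to\bbH^2$, with constants $(C,K')$, together with the induced boundary homeomorphisms. Let $\eta^\ast\subset\bbH^2$ be the genuine geodesic joining $\partial\Psi(\xi_-)$ to $\partial\Psi(\xi_+)$. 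Because $\Psi(\eta)$ is a quasigeodesic line in the hyperbolic space $\bbH^2$ with those endpoints at infinity, the Morse stability lemma gives a constant $H$ with $\Psi(\eta)\subset N_H(\eta^\ast)$ and $\eta^\ast\subset N_H(\Psi(\eta))$. The geodesic $\eta^\ast$ splits $\bbH^2$ into two genuine open half-planes $H_-,H_+$, which correspond (via $\partial\Psi$) to the two arcs $A_-,A_+$.

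Next comes the separation argument proper. I would choose deep points $p_\pm\in H_\pm$ with $d(p_\pm,\eta^\ast)$ very large, and set $y_\pm:=\Phi(p_\pm)\in Y$; the quasi-isometry estimates make $d(y_\pm,\eta)$ as large as desired. The claim is that for $K$ large enough, $y_+$ and $y_-$ lie in different components of $Y\setminus N_K(\eta)$. Suppose a path $c$ in $Y\setminus N_K(\eta)$ joined them. Sampling points $q_0,\dots,q_m$ along $c$ with $d(q_i,q_{i+1})\le 1$ yields, under $\Psi$, a ``$J$-coarse path'' $\Psi(q_0),\dots,\Psi(q_m)$ with $J:=C+K'$ and consecutive jumps $\le J$. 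For each $q$ on $c$ one has $d(q,\eta)>K$, so using the quasi-isometry lower bound and $\Psi(\eta)\subset N_H(\eta^\ast)$,
$$ d\bigl(\Psi(q),\eta^\ast\bigr)\ \ge\ \tfrac1C\,d(q,\eta)-K'-H\ >\ \tfrac{K}{C}-K'-H\ =:\ R. $$
Thus the whole coarse path avoids $N_R(\eta^\ast)$. Now $\bbH^2\setminus N_R(\eta^\ast)$ has exactly the two components $H_\pm\setminus N_R(\eta^\ast)$, and a jump from one to the other would force a geodesic segment of length $\le J$ meeting $\eta^\ast$, hence a sampled point within $J$ of $\eta^\ast$; choosing $K$ so that $R>J$ rules this out. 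Therefore the coarse path stays on a single side, contradicting that $\Psi(y_+)$ is near $p_+\in H_+$ while $\Psi(y_-)$ is near $p_-\in H_-$. Hence no such $c$ exists, $y_+$ and $y_-$ lie in distinct components, and $N_K(\eta)\cap Y$ separates $Y$.

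The step I expect to be the main obstacle is precisely the transfer of separation across the quasi-isometry: coarse separation is not a formal quasi-isometry invariant, so the argument must be arranged as above, using (a) Morse stability to pin $\Psi(\eta)$ to the honest geodesic $\eta^\ast$, and (b) the genuine separation of $\bbH^2$ by $\eta^\ast$ to block side-changes of a coarse path once $K$ is taken large relative to the sampling and quasi-isometry constants. A secondary point requiring care is verifying that $y_+$ and $y_-$ genuinely sit on opposite sides, which I would track through the boundary correspondence $A_\pm\leftrightarrow H_\pm$ rather than through any intrinsic notion of ``side'' in $Y$.
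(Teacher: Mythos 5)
Your proof is correct, and it shares the paper's overall strategy --- compare $Y$ to the hyperbolic plane via a quasi-isometry supplied by the surface-group hypothesis, pin the image of $\eta$ to a genuine geodesic $\eta^\ast$ by Morse stability, and import the separation of $\bbH^2$ by $\eta^\ast$ --- but it executes the one delicate step, the transfer of separation, quite differently. The paper chooses the quasi-isometry to be \emph{continuous}: it fixes a homotopy equivalence $f\colon Y/\Gamma\to\Sigma$ to a closed hyperbolic surface and lifts it to $\tilde f\colon Y\to\tilde\Sigma$, so that $\tilde f^{-1}(N_D(\gamma))$ separates $Y$ for purely topological reasons (the two sides of $N_D(\gamma)$ have disjoint open preimages, both nonempty by coarse surjectivity), and then the quasi-isometry lower bound gives $\tilde f^{-1}(N_D(\gamma))\subseteq N_K(\eta)$ with $K=2CD+L$. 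You instead work with an arbitrary quasi-isometry and quasi-inverse, for which preimages are useless, and you replace the topological pullback with a chain argument: sample a hypothetical path in $Y\setminus N_K(\eta)$ at scale $1$, push the samples into $\bbH^2$, observe that they avoid $N_R(\eta^\ast)$ with $R=K/C-K'-H$, and note that a jump of size $J=C+K'$ cannot change sides of $\eta^\ast$ once $R>J$. You correctly identified that this is where the content lies --- coarse separation does not transfer formally across quasi-isometries --- and your resolution is sound; the paper's continuity trick simply dissolves that obstacle at the cost of needing a continuous quasi-isometric representative (the lifted homotopy equivalence). Two small points in your write-up: since you only exclude \emph{paths}, you should take $N_K(\eta)$ closed, so that $Y\setminus N_K(\eta)$ is open in the locally path-connected space $Y$ and its path components agree with its components; and the boundary correspondence $A_\pm\leftrightarrow H_\pm$ is not needed to place $y_\pm$ on opposite sides --- $\Psi(y_\pm)=\Psi\circ\Phi(p_\pm)$ is uniformly close to $p_\pm$, which you already chose deep in $H_\pm$.
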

\begin{proof} Let $\Sigma$ be a closed hyperbolic surface with fundamental group $\Gamma$ and fix a homotopy equivalence $f:Y\slash\Gamma\rightarrow \Sigma$. Lift $f$ to a map on universal covers $\tilde{f}:Y\rightarrow \tilde{\Sigma}$. Then $\tilde{f}$ is a $(C,L)$-quasi-isometry for some $C\geq 1$ and $L>0$. Further we can choose $D>0$ such that a $(C,L)$-quasigeodesic in $\tilde{\Sigma}$ is Hausdorff distance at most $D$ from a geodesic. We will prove the lemma with $K:=2CD+L$. 

To see this, let $\eta$ be a geodesic in $Y$. Then $\tilde{f}\circ\eta$ is a $(C,L)$-quasigeodesic and hence is contained in the $D$-neighborhood of a geodesic $\gamma$. Since $\gamma$ separates $\tilde{\Sigma}$, it follows that $\tilde{f}^{-1}(N_D(\gamma))$ separates $Y$. 
Now let $x\in \tilde{f}^{-1}(N_D(\gamma))$. Then we have
	$$d(x,\eta)\leq Cd(\tilde{f}(x),\tilde{f}\circ\eta)+L\leq 2CD+L,$$
which is the desired bound.\end{proof}

Note that for $K$ as in Lemma \ref{lem:separation}, $Y\backslash N_K(\eta)$ will consist of two unbounded components because $\partial N_K(\eta)=\partial \eta$ consists of two distinct points in $\Lambda\cong S^1$, so that $\Lambda\backslash \partial N_K(\eta)$ consists of two components. 

\begin{propnr}[Definite expansion] Fix a bounded fundamental domain $W$ for the $\Gamma$-action on $Y$ and fix $w \in W.$  Further choose $K>0$ as in Lemma \ref{lem:separation}. Then there exists $A>0$ with the following property.
\begin{itemize}
	\item Let $\eta$ be a geodesic in $Y$ such that $w\notin N_K(\eta)$,
	\item set $R:=d(w,\eta)$ and let $p$ be the projection of $w$ onto $\eta$,
	\item let $\gamma\in\Gamma$ be such that $\gamma p\in W$,
 	\item let $\xi$ be any geodesic in the unbounded component of $Y\backslash N_K(\eta)$ not containing $w$. 
\end{itemize}
Then
	$$R-A\leq d(w,\xi)-d(w,\gamma\cdot \xi)\leq R+A.$$
\label{lem:expansion}
\end{propnr}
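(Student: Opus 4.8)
The plan is to compare the three quantities $R=d(w,\eta)$, $d(w,\xi)$, and $d(w,\gamma\cdot\xi)$ by exploiting the $\delta$-hyperbolicity of $X$ (which holds with a universal $\delta$ because $X$ is CAT($-1$)) together with the separation property of Lemma \ref{lem:separation}. Write $D:=\diam(W)$. The first, purely formal, step is to eliminate $\gamma$: since $\gamma$ is an isometry, $d(w,\gamma\cdot\xi)=d(\gamma^{-1}w,\xi)$, and since $\gamma p\in W$ and $w\in W$ we have $d(\gamma^{-1}w,p)=d(w,\gamma p)\le D$. As the set-distance to $\xi$ is $1$-Lipschitz in the basepoint, this gives $|d(w,\gamma\cdot\xi)-d(p,\xi)|\le D$, so the Proposition reduces to the single estimate $|d(w,\xi)-(R+d(p,\xi))|\le A_0$ for a constant $A_0=A_0(\delta,K)$; one then sets $A:=A_0+D$.

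Next I would upgrade this to a pointwise statement. I claim that for every point $z$ lying in the far component of $Y\backslash N_K(\eta)$ one has $|d(w,z)-(R+d(p,z))|\le A_0$; taking the infimum over $z\in\xi$ then yields the estimate for $d(w,\xi)$. The upper bound $d(w,z)\le d(w,p)+d(p,z)=R+d(p,z)$ is just the triangle inequality (recall $p=\pi_\eta(w)$ realizes $R$), so the real content is the matching lower bound, equivalently a uniform upper bound on the Gromov product $(w\vert z)_p=\tfrac12\big(d(p,w)+d(p,z)-d(w,z)\big)$.

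The heart of the argument, and the step I expect to be the main obstacle, is bounding $(w\vert z)_p$. The key geometric input is that $p=\pi_\eta(w)$ is the nearest-point projection of $w$ onto $\eta$: in a $\delta$-hyperbolic space this forces $p$ to be a coarse branch point, so that $(w\vert u)_p\le C_0(\delta)$ for every $u\in\eta$ (the concatenation of $[w,p]$ with $[p,u]$ is a uniform quasigeodesic). Now since $Y$ is convex the geodesic $[w,z]$ lies in $Y$ and joins the two distinct components of $Y\backslash N_K(\eta)$, so by Lemma \ref{lem:separation} it must meet $N_K(\eta)$; thus there is a point $q\in[w,z]$ with $d(q,\eta)\le K$. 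Taking $u:=\pi_\eta(q)$, so $d(q,u)\le K$, the projection bound gives $(w\vert q)_p\le (w\vert u)_p+d(q,u)\le C_0+K$. Since $q$ lies on $[w,z]$, this shows that $[w,z]$ passes within a bounded distance of $p$, and hence, using the standard comparison $(w\vert z)_p\approx d(p,[w,z])$ valid up to a bounded additive error in a $\delta$-hyperbolic space, that $(w\vert z)_p\le A_0/2$ for a suitable $A_0=A_0(\delta,K)$. This is exactly the required lower bound.

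I expect the delicate points to be those of coarse geometry: justifying that $[w,z]$ genuinely crosses $N_K(\eta)$ (this is where convexity of $Y$ and Lemma \ref{lem:separation} enter, and where one must check that $w$ and $z$ lie in distinct components), and keeping all additive constants uniform, i.e.\ depending only on $\delta$ and $K$ and not on $\eta$, $z$, $R$, or $\gamma$. As a sanity check, the same conclusion can be phrased through Remark \ref{rmk:alternate}: writing $x,y\in\Lambda$ for the endpoints of $\xi$, one has $d(w,\xi)=(x\vert y)_w\pm C$ and $d(p,\xi)=(x\vert y)_p\pm C$, so the whole Proposition amounts to the assertion $(x\vert y)_w-(x\vert y)_p=R\pm\text{const}$, which is the boundary reformulation of the very same additive estimate.
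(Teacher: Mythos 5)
Your argument is correct, and its geometric core coincides with the paper's, but the surrounding architecture is a genuinely different and leaner decomposition. The shared heart is your bound on $(w\vert z)_p$: the paper proves exactly this ingredient in the form $d(p,[w,q])<3\delta+K$ (its inequalities (3)--(5)), by the same mechanism you describe --- Lemma \ref{lem:separation} forces the geodesic from $w$ to the far side to meet $N_K(\eta)$, and a thin-triangle argument at the projection point $p$ does the rest. What differs is everything around it: the paper never eliminates $\gamma$, but instead works simultaneously with the projections $q, q'$ of $w$ onto $\xi$ and $\gamma\xi$ and $p'$ onto $\gamma\eta$, proves a primed analogue of the key claim, bounds $d(\gamma q, q')\le 2D+2K+6\delta$ using that nearest-point projections do not increase distances, and assembles the two-sided estimate from a chain of triangle inequalities. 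Your reduction --- $d(w,\gamma\xi)=d(\gamma^{-1}w,\xi)$ together with the $1$-Lipschitz dependence of set-distance on the basepoint and $d(\gamma^{-1}w,p)=d(w,\gamma p)\le D$ --- collapses the entire $\gamma$-side to the single additive error $D$, and your pointwise formulation $d(w,z)=R+d(p,z)\pm A_0$ (uniform in $z$, so it passes to the infima defining $d(w,\xi)$ and $d(p,\xi)$) replaces the paper's projection bookkeeping; you even get the upper bound with no additive loss, straight from the triangle inequality. The price is that you invoke two standard $\delta$-hyperbolicity facts (the projection lemma $(w\vert u)_p\le C_0(\delta)$ for $u\in\eta$, and the comparison $d(p,[w,z])\le (w\vert z)_p+c\delta$) where the paper proves its version of the first from scratch via CAT(0) convexity; note that the second can be avoided entirely, since $q\in[w,z]$ gives $d(w,z)=d(w,q)+d(q,z)$ and $d(p,z)\le d(p,q)+d(q,z)$, hence $(w\vert z)_p\le (w\vert q)_p$ directly. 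With that shortcut your constant is $A=2K+6\delta+D$, comparable to (indeed slightly better than) the paper's $A=4D+4K+12\delta$.
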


\begin{proof} 
 As $X$ is CAT(-1), $X$ is also Gromov hyperbolic.  Hence, there is a $\delta>0$ with the property that for each geodesic triangle $\Delta(abc)$, the side $[a,b]$ is contained in the $\delta$-neighborhood of the union of the remaining sides $[a,c]\cup [b,c]$.  Let $D=\diam(W)$.  The estimates that follow will show the Lemma holds when $A=4D+4K+12\delta$.

Let $q$ (resp. $q'$) be the projection of $w$ onto $\xi$ (resp. $\gamma \xi$), and let $p'$ be the projection of $w$ onto $\gamma \eta$.  Then since $w ,\gamma p \in W$,
\begin{equation}\label{1}
d(w,p')=d(w, \gamma \eta) \leq d(w, \gamma p)\leq D 
\end{equation}
and
\begin{equation}\label{2}
d(\gamma p, p') \leq d(\gamma p,w) + d(w,p') \leq 2D.
\end{equation}
Now we claim that
\begin{equation}\label{3}
d(p,[w,q])< 3\delta+K
\end{equation}
and 
\begin{equation}\label{4}
d(p',[w,q'])<3\delta+K.
\end{equation}
To prove (\ref{3}), choose $x\in [w,q]\cap N_K(\eta)$ (note that $[w,q]\cap N_K(\eta)$ is nonempty because the endpoints of $\xi$ lie in the component of $Y\backslash N_K(\eta)$ that does not contain $w$). Let $u$ be the projection of $x$ onto $\eta$. If $d(p,u)<2\delta$ then we are done (since $d(x,u)\leq K$), so let us assume that $d(p,u)\geq 2\delta$. Consider the geodesic triangle $\Delta(wpu)$, and let $y$ be the point on $[p,u]$ at distance $2\delta$ from $p$. By the definition of $\delta$, we know that $d(y,[w,p]\cup[w,u])<\delta$. However, since $d(y,p)>\delta$ and $p$ is the projection of $y$ onto $[w,p]$, we have $d(y,[w,p])>\delta$. Therefore we must have $d(y,[w,u])<\delta$. Since we also know $d(y,p)=2\delta$, we conclude that $d(p,[w,u])<3\delta$.  Using convexity of distance functions, we have that $(u,x)$ maximizes the function $d:[w,u]\times[w,x]\rightarrow \mathbb{R}$.  As $[w,x]\subset [w,q]$, $$d(p,[w,q])\leq d(p,[w,x])\leq d(p,[w,u])+d(u,x)<3\delta+K.$$  The proof of (\ref{4}) is analogous to that of (\ref{3}).
 Figure \ref{fig:thintriangles} illustrates this argument in the special case where $K=0$.

	\begin{figure}[h]
		\centering
		\includegraphics[height=3in]{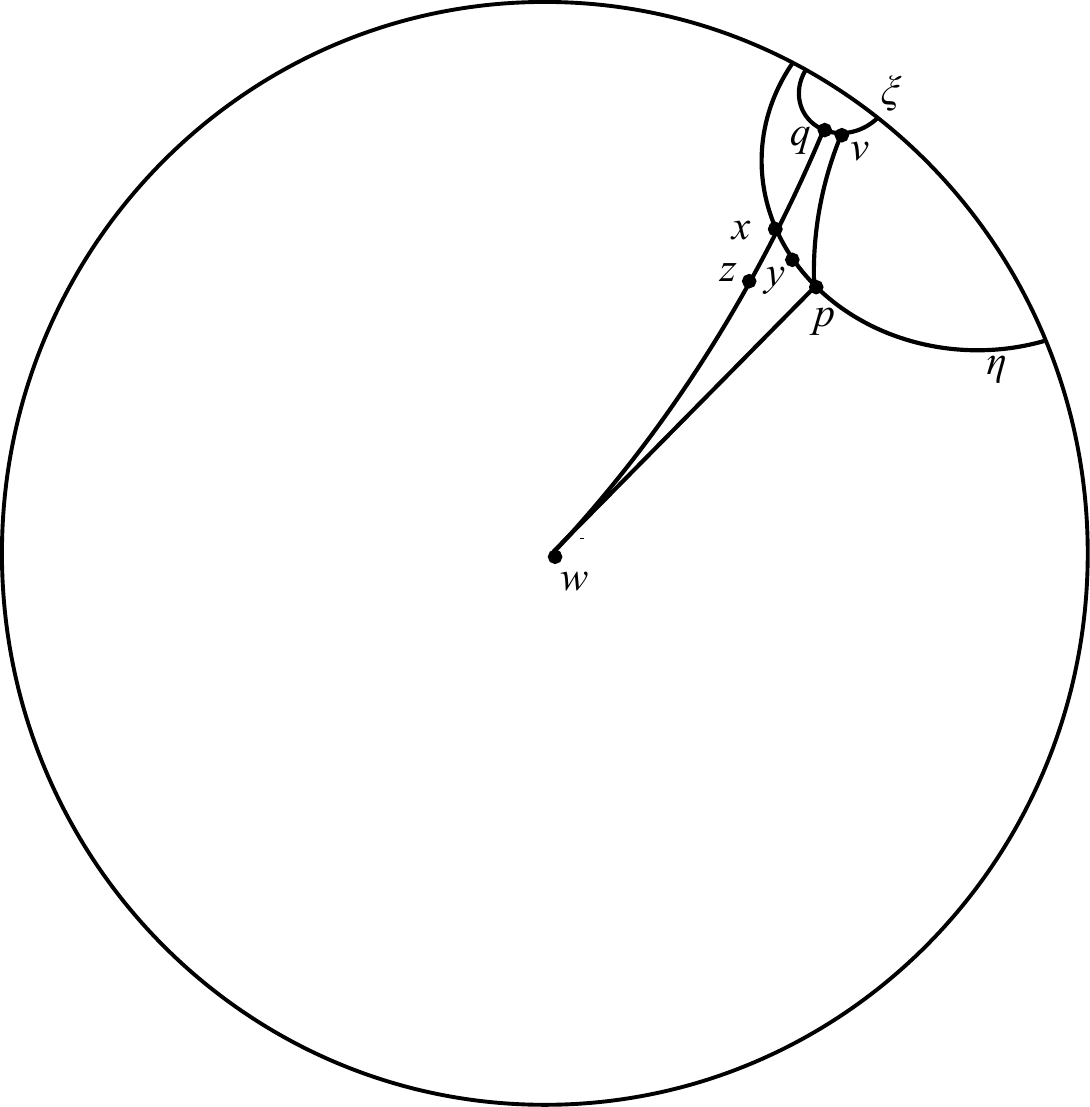}
		\caption{Proof that $d(p,[w,q])<3\delta$ if $\dim(Y)=2$.}
		\label{fig:thintriangles}
	\end{figure}



By (\ref{3}) and (\ref{4}), there exist points $z$ and $z'$ on the segments $[w,q]$ and $[w,q']$ such that 
\begin{equation}\label{5}
d(p,z)< 3\delta+K
\end{equation}
 and 
 \begin{equation}\label{6}
 d(p',z')< 3\delta+K.
 \end{equation}
Let $v$ be the projection of $p$ onto $\xi$, and let $v'$ be the projection of $p'$ onto $\gamma \xi$ (see Figure 2).  Note that $\gamma v$ is the projection of $\gamma p$ onto $\gamma \xi$.  

As the projection of the segment $[p,z]$ to $\xi$ is the segment $[v,q]$, and since projections decrease distances, (\ref{5}) implies

\begin{equation}\label{7}
d(v,q)\leq d(p,z)
\end{equation} 
Analogous arguments show that 
\begin{equation}\label{8}
d(v',q')\leq d(p',z')
\end{equation}
and
\begin{equation}\label{9}
d(\gamma v, v')\leq d(\gamma p, p').
\end{equation}

Use the triangle inequality, (\ref{2}), and (\ref{5})-(\ref{9}) to estimate
\begin{align}\label{10}
		d(\gamma q,q')&\leq d(\gamma q,\gamma v)+d(\gamma v, q')\nonumber\\
		&=d(q,v)+d(\gamma v, q')\nonumber\\
		&\leq d(p,z)+d(\gamma v, q')\\
		&\leq d(p,z) +d(\gamma v, v')+d(v',q')\nonumber\\
		&\leq d(p,z)+d(\gamma p, p')+d(p',z')\nonumber\\
		&<3\delta +K+ 2D+3\delta+K\nonumber\\
		&=2D+2K+6\delta\nonumber
	\end{align}

\begin{figure}[h]
	\centering
	\includegraphics[height=3in]{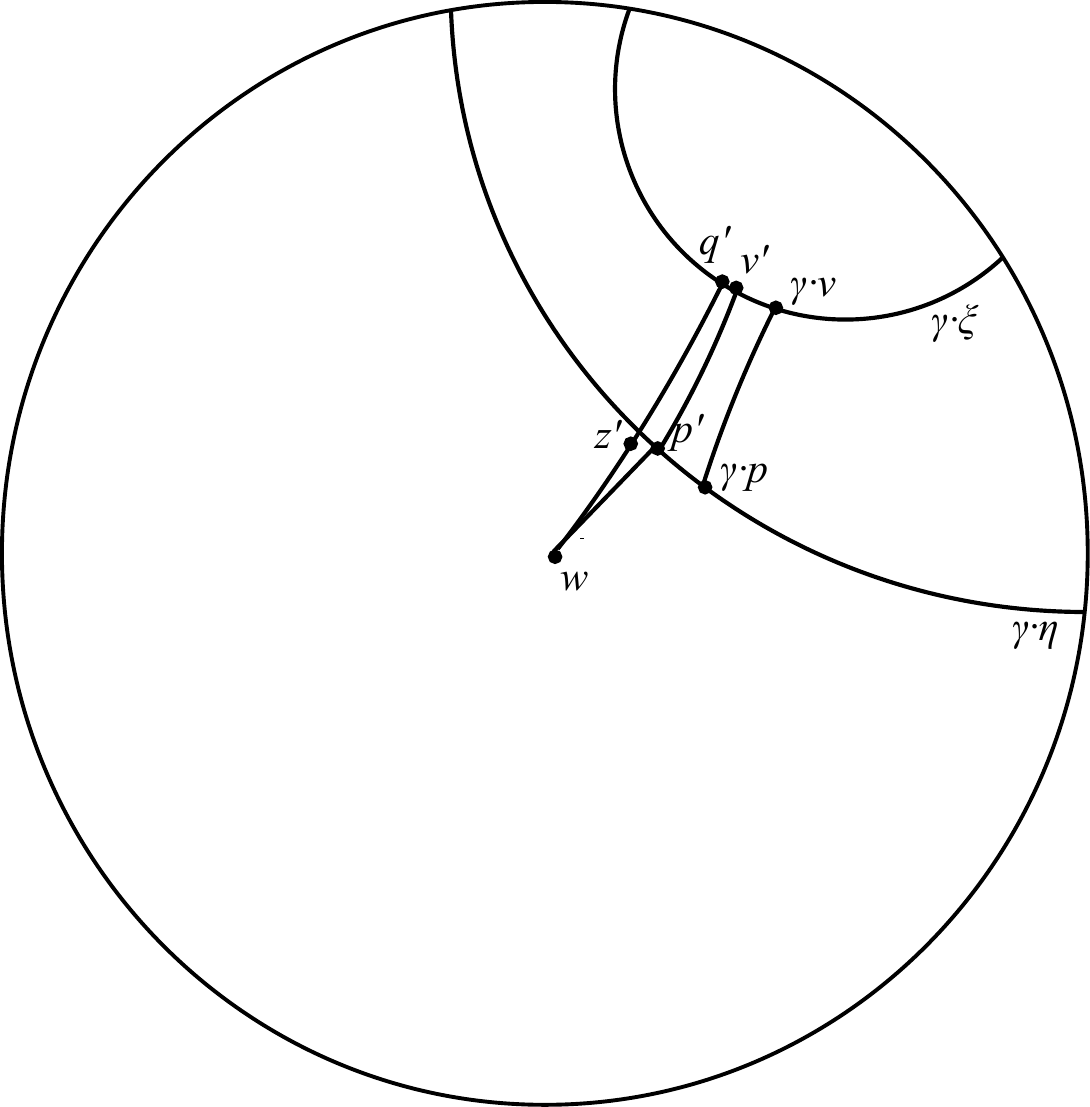}
	\caption{Proof of Proposition \ref{lem:expansion}.}
	\label{fig:proof2}
\end{figure}

The triangle inequality and the assumption $d(w,p)=d(w, \eta)=R$ imply
\begin{equation}\label{11}
d(w,\xi)=d(w,q)\leq d(w,p)+d(p,q)=R+d(p,q).
\end{equation}
The triangle inequality and (\ref{1}) imply
\begin{equation}\label{12}
d(w, \gamma \xi)\leq d(w,\gamma q)\leq d(w,\gamma p)+d(\gamma p, \gamma q)=D+d(p,q).
\end{equation}
The triangle inequality, (\ref{5}), and the assumption $d(w,p)=R$ imply
	\begin{align}\label{13}
		d(w,\xi)&=d(w,q)\nonumber\\
		&=d(w,z)+d(z,q)\nonumber\\
		&\geq d(w,p)-d(p,z) +d(p,q)-d(p,z)\\
		&>R+d(p,q)-6\delta-2K.\nonumber
	\end{align}
Similarly, the triangle inequality, (\ref{2}), (\ref{6}), and (\ref{10}) imply
	\begin{align}\label{14}
		d(w,\gamma \xi)&=d(w,z')+d(z',q')\nonumber\\
		&\geq d(w,p')-d(p',z')+d(p',q')-d(p',z')\nonumber\\
		&> d(p',q')-6\delta-2K\\
		&\geq d(\gamma p,\gamma q)-d(\gamma p,p')-d(\gamma q,q')-6\delta-2K\nonumber\\
		&\geq d(p,q)-2D-d(\gamma q,q')-6\delta-2K\nonumber\\
		&\geq d(p,q)-2D-(2D+2K+6\delta)-6\delta-2K\nonumber\\
		&=d(p,q)-(4D+4K+12\delta).\nonumber
	\end{align}
Combining (\ref{11}) and (\ref{14}) gives $$d(w,\xi)-d(w,\gamma \xi) \leq R+(4D+4K+12\delta)= R+A$$
Combining (\ref{12}) and (\ref{13}) gives $$d(w,\xi)-d(w,\gamma \xi)\geq R-(D+2K+6\delta)> R-A,$$
concluding the proof of the proposition.


\end{proof}

Now consider the visual metric associated to the basepoint $w\in X$ defined by 
	$$d_w(x,y)=e^{-(x|y)_w}$$
for $x,y\in\partial X$. We will prove that $(\Lambda, d_w)$ has Property (ii) from Definition \ref{dfn:qcircle}. Since $d_w$ and $\hat{d}_w$ are bi-Lipschitz equivalent (see Remark \ref{rmk:alternate}) and Property (ii) continues to hold after a change up to bi-Lipschitz equivalence, we will work with $\hat{d}_w$ instead.

Fix a fundamental domain $W$ for the $\Gamma$-action on $Y$ and choose $A\geq 0$ as in Proposition \ref{lem:expansion}.  Let $r_0:=\frac{1}{2}e^{-K}$, $a:=e^{-A}$, and $b:=e^{A}$.

Given $0<r<r_0$ and $N$ a ball of radius $r$ in $\Lambda$, let $\eta$ be the geodesic connecting the endpoints of $N$ so that $2r=\diam(N)=e^{-d(w,\eta)}$. Note that since $2r<e^{-K}$, we have $d(w,\eta)\geq-\log(2r)-K>0$ so that $N_K(\eta)$ does not contain $w$. Further since $r<\frac{1}{2}$, we know that $\diam(N)<\diam(\partial X\backslash N)$, so that $N$ lies on the side of $\eta$ not containing $w$.

Let $p$ be the projection of $w$ onto $\eta$ and choose $\gamma$ such that $\gamma p\in W$. Now let $x,y\in N$ be distinct and let $\xi$ be the geodesic joining $x$ and $y$. By Proposition \ref{lem:expansion}, we have
	$$R-A\leq d(w,\xi)-d(w,\gamma\cdot \xi)\leq R+A,$$
where $R:=d(w,\eta)=-\log(r).$ Further note that
	$$\frac{\hat{d}_w(\gamma x,\gamma y)}{\hat{d}_w(x,y)}=e^{d(w,\xi)-d(w,\gamma\cdot \xi)}$$
so we have
	$$e^{-A} e^{R}\leq \frac{\hat{d}_w(\gamma x,\gamma y)}{\hat{d}_w(x,y)}\leq e^{A} e^{R},$$
or equivalently $$\frac{a}{r}\leq \frac{\hat{d}_w(\gamma x,\gamma y)}{\hat{d}_w(x,y)}\leq \frac{b}{r}.$$
This establishes property (ii) in Definition \ref{dfn:qcircle}, and hence completes the proof of Theorem \ref{thm:main}.




\section{Constructing exotic almost-isometries} \label{sec:main2}

This section is devoted to the proof of Theorem \ref{thm:main2}. In view of Corollary \ref{cor:lipequiv}, we want to produce
a $k$-dimensional family $\mathcal F_k$ of equal area metrics on a higher genus surface $M$, 
which all have the same topological entropy, but whose lifts to the universal cover are not isometric to each other. 

\subsection{Perturbations of metrics}

We start with a fixed reference hyperbolic metric $g_0$ on $M$, normalized to have constant curvature $-2$. Pick
$(k+2)$ distinct points $p_1, \ldots p_{k+2} \in M$, and
choose $r_2$ smaller than the injectivity radius of $M$ and satisfying $2r_2< \inf_{i\neq j} \{d(p_i, p_j)\}$. 
Let $U_i$ denote the open metric ball of radius $r_2$ centered at $p_i$ -- note that the $U_i$ are all isometric to each 
other, and are pairwise disjoint. Now choose $r_1<r_2$ so 
that the area of the ball of radius $r_1$ is at least $4/5$ the area of the ball of radius $r_2$. Denote by 
$V_i \subset U_i$ the ball of radius $r_1$ centered at each $p_i$. 

We will vary the metric $g_0$ by introducing a perturbation on each of the $U_i$ in the following manner. Let us choose a 
smooth bump function $\rho: [0, \infty) \rightarrow [0,1]$ with the property that $\rho|_{[0, r_1]} \equiv 1$ and 
$\rho|_{[r_2, \infty)}\equiv 0$. Next define $u_i: M \rightarrow [0,1]$ via $u_i(x) := \rho\left( d(x, p_i)\right)$. Given
a parameter $\vec t :=(t_1, \ldots , t_{k+2}) \in \mathbb R ^{k+2}$, define the function 
$u_{\vec t}: M\rightarrow [0, \infty)$ by setting $u_{\vec t}:= t_1u_1 +\cdots + t_{k+2} u_{k+2}$. Finally,
we define the metric $g_{\vec t} := e^{2u_{\vec t}}g_0$ (and note the identification $g_{\vec 0}=g_0$). 
The family $\mathcal F_k$ will be obtained by choosing suitable values of $\vec t$ close to $\vec 0$.

Since the metric $g_{\vec t}$ is obtained by making a conformal change on each $U_i$, 
and since the $U_i$ are pairwise disjoint, we first analyze the behavior of such a change on an individual $U_i$. To 
simplify notation, denote by $V\subset U$ open balls of radius $r_1< r_2$ centered at a point $p$ 
in the hyperbolic plane $\mathbb H^2_{-2}$ of 
curvature $-2$, and set $g_t := e^{2t u}g_0$ where $g_0$ is the hyperbolic metric of curvature $-2$, and
$u: \mathbb H^2_{-2} \rightarrow [0, \infty)$ is given by $u(x) := \rho\left( d(p, x) \right)$.
We start with the easy:

\begin{lemnr}
As $t \to 0$, we have the following estimates:
\begin{enumerate}
\item the curvatures $K(g_t)$ tend uniformly to $-2$.
\item the area $Area(U; g_t)$ of the ball $U$ tends to $Area(U; g_0)$.
\item the area $Area(V; g_t)$ of the ball $V$ tends to $Area(V; g_0)$.
\end{enumerate}
\label{lem:small-lambda}
\end{lemnr}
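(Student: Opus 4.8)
The plan is to rely on the standard behavior of Gauss curvature and area under a conformal change of metric in dimension two, together with the observation that every relevant quantity depends continuously (indeed boundedly) on the parameter $t$ near $0$. The single fact that makes all three estimates routine is that $u = \rho(d(p,\cdot))$ is a \emph{smooth, compactly supported} function on $\mathbb{H}^2_{-2}$: it is identically $1$ on the ball $V$ (where the distance function $d(p,\cdot)$ fails to be smooth) because $\rho \equiv 1$ on $[0,r_1]$, and it vanishes outside $U$ because $\rho \equiv 0$ on $[r_2,\infty)$. Hence both $u$ and its Laplacian $\Delta_{g_0}u$ are continuous with compact support, and thus uniformly bounded, say by a constant $M$.

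For (1), I would record the two-dimensional conformal change formula for Gauss curvature. Writing $g_t = e^{2tu}g_0$ and using that $K(g_0) = -2$, this reads
$$K(g_t) = e^{-2tu}\bigl(K(g_0) - t\,\Delta_{g_0}u\bigr) = e^{-2tu}\bigl(-2 - t\,\Delta_{g_0}u\bigr),$$
with the usual sign convention for the Laplacian $\Delta_{g_0}$ of $g_0$. Since $0 \le u \le 1$ and $|\Delta_{g_0}u| \le M$, as $t \to 0$ the factor $e^{-2tu}$ tends to $1$ uniformly and the term $t\,\Delta_{g_0}u$ tends to $0$ uniformly, so $K(g_t) \to -2$ uniformly on $\mathbb{H}^2_{-2}$.

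For (2) and (3), I would use that in dimension two the area form rescales by the conformal factor, $dA_{g_t} = e^{2tu}\,dA_{g_0}$, so that $\Area(U; g_t) = \int_U e^{2tu}\,dA_{g_0}$, and likewise over $V$. Because $0 \le u \le 1$, we have $\sup_U |e^{2tu}-1| \to 0$ as $t \to 0$, and since $U$ has finite $g_0$-area the estimate
$$\bigl|\Area(U; g_t) - \Area(U; g_0)\bigr| \le \Bigl(\sup_U |e^{2tu}-1|\Bigr)\,\Area(U; g_0) \to 0$$
gives (2); the identical argument over $V$ gives (3). I do not expect a genuine obstacle here, consistent with this being ``the easy'' lemma: the only point needing any care is the boundedness of $\Delta_{g_0}u$, which is exactly what the smoothness of $u$ across $p$ guarantees.
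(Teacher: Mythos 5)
Your proof is correct and takes essentially the same approach as the paper's: both rest on the conformal change formulas $K(g_t) = e^{-2tu}\bigl(K(g_0) - t\,\Delta_{g_0}u\bigr)$ and $dA_{g_t} = e^{2tu}\,dA_{g_0}$, followed by letting $t \to 0$. You merely make explicit the uniform bounds on $u$ and $\Delta_{g_0}u$ (valid because $u$ is smooth with compact support, being constant near $p$) that the paper treats as clear.
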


\begin{proof}
This is straightforward from the formulas expressing how curvature and area change when one makes a conformal change 
of metric. We have that the new curvature $K(g_t)$ is related to the old curvature $K(g_0)$ via the formula
$$K(g_t) =  (e^{-2u})^t K(g_0) - t (e^{-2 u})^t \Delta u$$
where $\Delta u$ denotes the Laplacian of the function $u$ in the hyperbolic metric $g_0$. As $t$ tends to zero,
it is clear that the expression to the right converges to $K(g_0)$ uniformly, giving (1). 
Similarly, the area form $dg_t$ for the new metric is related to the area form $dg_0$ for the original metric via the 
formula $dg_t = (e^{2u})^t dg_0$ giving us (2) and (3).
\end{proof}


\subsection{Lifted metrics are almost-isometric}

Next we establish that, for suitable choices of the parameter $\vec t$, we can arrange for the lifted metrics to be
almost-isometric. By Lemma \ref{lem:small-lambda}, we can take the parameters $\vec t$ close enough to $\vec 0$
to ensure that all the metrics we consider have sectional curvatures $\leq -1$. 
Then from Corollary \ref{cor:lipequiv}, it suffices to consider values of the parameter $\vec t$ for
which the corresponding metrics have the same topological entropy for the geodesic flow on $T^1M$. 
Notice that varying $\vec t$ near $\vec 0$ gives a $C^\infty$ family of 
perturbations of the metric $g_{0}$. Work of Katok, Knieper, Pollicott and Weiss \cite[Theorem 2]{KKPW} then implies that the
topological entropy map $h$, when restricted to any line $l(s)$ through the origin $\vec 0$ in the $\vec t$-space, 
is a $C^\infty$ map. Moreover the derivative of $h$ along the line is given by (see \cite[Theorem 3]{KKW})
$$\frac{\partial}{\partial s}\Big|_{s=0}h\left(g_{l(s)}\right) = -\frac{h(g_0)}{2} \int_{T^1M}\frac{\partial}{\partial s}\Big|_{s=0}g_{l(s)}(v,v) d\mu_0$$
where $T^1M$ denotes the unit tangent bundle of $M$ with respect to the $g_0$-metric, and $\mu_0$ denotes the
Margulis measure of $g_0$ (the unique measure of maximal entropy for the $g_0$-geodesic flow on $T^1M$). 

Consider the map $F: \mathbb R^{k+2} \rightarrow \mathbb R$ given by $F(t_1, \ldots , t_{k+2}):= h(g_{(t_1,\ldots , t_{k+2})})$, 
where $h$ denotes the topological entropy of (the geodesic flow associated to) a metric. Let us compute the directional derivative
in the direction $\frac{\partial}{ \partial t_1}$:
\begin{align*}
\frac{\partial F}{\partial t_1} (0,\dots, 0) &= \frac{d}{dt}\Big|_{t=0}h\left(g_{(t, 0,\ldots, 0)}\right) = -\frac{h(g_0)}{2} \int_{T^1M}\frac{d}{d t}\Big|_{t=0}g_{(t,0,\ldots, 0)}(v,v) d\mu_0 \\
& = -\frac{h(g_0)}{2} \int_{T^1M}\frac{d}{d t}\Big|_{t=0} e^{2tu_1\big(\pi(v)\big)} d\mu_0\\
& = -\frac{h(g_0)}{2} \int_{T^1M}2u_1\big(\pi(v)\big) d\mu_0
\end{align*}
where $\pi: T^1M \rightarrow M$ is the projection from the unit tangent bundle onto the surface $M$. Finally, we observe
that by construction $u_1$ is a non-negative function, which is identically zero on the complement of $U_1$, and identically 
one on the set $V_1$. Hence the integral above is positive, and we obtain $\frac{\partial F}{\partial t_1}(\vec 0)<0$. 

Now, a similar calculation applied to each of the other coordinates gives us the general formula for the
directional derivative of $F$. The gradient of $F$ is given by the non-vanishing vector:
$$\nabla F = -{h(g_0)}\int _{T^1M}\langle u_1\big(\pi(v)\big), \ldots , u_{k+2}\big(\pi(v)\big)\rangle  d\mu_0.$$ 
In fact, since each $u_i$ is supported solely on $U_i$, and each $u_i$ is defined as $\rho\left(d(p_i, x)\right)$ on 
the $U_i$, each of the integrals in the expression for $\nabla F$ has the same value. So $\nabla F$ is just a nonzero multiple
of the vector $\langle 1, \ldots, 1\rangle$.

The implicit function theorem now locally gives us an embedded codimension one submanifold $\sigma(z)$ 
(where $z \in \mathbb{R}^{k+1}$, $||z|| < \epsilon$) in the $(t_1,\ldots, t_{k+2})$-space, with normal vector 
$\langle 1, \ldots , 1\rangle$ at the point $\sigma(\vec 0)=\vec 0$, on which the topological entropy functional is constant.  From Corollary \ref{cor:lipequiv}, we see that the lifts of these metrics to the universal cover are all pairwise almost-isometric.

\subsection{Lifted metrics are not isometric}


\begin{lemnr}
There is an $\epsilon>0$ so that if the parameters $\vec s = (s_1, \ldots , s_{k+2})$ and $\vec t=(t_1, \ldots , t_{k+2})$ satisfy 
$0<|s_i|< \epsilon$ and $0<|t_i|<\epsilon$ and the lifted metrics $(\tilde M, \tilde g_{\vec s})$ 
and $(\tilde M, \tilde g_{\vec t})$ are isometric to each other, then we must have an equality of multisets $\{s_1, \ldots, s_{k+2}\} =\{t_1, \ldots , t_{k+2}\}$.
\label{lem:multiset}\end{lemnr}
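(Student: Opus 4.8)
The plan is to read off the multiset $\{t_1,\dots,t_{k+2}\}$ from the Gaussian curvature of $g_{\vec t}$, exploiting that curvature is an isometry invariant. On the inner ball $V_i$ we have $u_{\vec t}\equiv t_i$, so $g_{\vec t}=e^{2t_i}g_0$ there and the formula in Lemma \ref{lem:small-lambda} gives the \emph{constant} value $K(g_{\vec t})\equiv -2e^{-2t_i}$; on $M\setminus\bigcup_i U_i$ we have $u_{\vec t}\equiv 0$, so $g_{\vec t}=g_0$ and $K\equiv -2$; and on each annulus $U_i\setminus V_i$ the curvature is a nonconstant radial function of $d(p_i,\cdot)$. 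First I would fix $\epsilon>0$ small enough (using Lemma \ref{lem:small-lambda}) that every metric $g_{\vec t}$ with $|t_i|<\epsilon$ is CAT$(-1)$, that the assignment $t\mapsto -2e^{-2t}$ is injective on $(-\epsilon,\epsilon)$ with all values distinct from $-2$, and that the only open sets on which the curvature of $\tilde g_{\vec t}$ is locally constant are the $\Gamma$-translates of the lifts of the $V_i$ (curvatures $-2e^{-2t_i}$) and the lift of $M\setminus\bigcup_i\overline{U_i}$ (curvature $-2$).

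Let $\psi\colon(\tilde M,\tilde g_{\vec s})\to(\tilde M,\tilde g_{\vec t})$ be an isometry. Since $\psi$ preserves Gaussian curvature, it carries each maximal open region of locally constant curvature to one with the same curvature value. By the previous paragraph this immediately forces $\{-2e^{-2s_i}\}=\{-2e^{-2t_j}\}$, hence equality of the underlying \emph{sets} $\{s_i\}=\{t_j\}$. The real content is the equality of multiplicities $m_v(\vec s)=m_v(\vec t)$, where $m_v$ counts how many indices $i$ have parameter equal to $v$.

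To obtain the multiplicities I would show that $\psi$ descends to an isometry $(M,g_{\vec s})\to(M,g_{\vec t})$ of the closed surfaces; granting this, the conclusion is immediate, since such an isometry preserves curvature and therefore maps the $m_v(\vec s)$ disks of curvature $-2e^{-2v}$ bijectively onto the $m_v(\vec t)$ such disks. The mechanism for descent is that the plateau of curvature $-2$ is literally the \emph{same} subset $\Omega:=\tilde M\setminus\bigcup(\text{lifts of }U_i)$, carrying the same hyperbolic metric $\tilde g_0$, for both parameters. Thus $\psi$ restricts to a self-isometry of $(\Omega,\tilde g_0)$, which extends uniquely to an isometry $\Phi$ of $\mathbb H^2_{-2}$; this $\Phi$ preserves the $\Gamma$-invariant set $\mathcal H$ of bump centers and satisfies the color-matching relation that the curvature of $\tilde g_{\vec s}$ at $z$ equals the curvature of $\tilde g_{\vec t}$ at $\Phi(z)$ for every $z\in\mathcal H$.

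The main obstacle is exactly that $\psi$ need not be $\Gamma$-equivariant, so a priori $\Phi$ need neither lie in $\Gamma$ nor normalize it; controlling this is the same as identifying $\isom(\tilde M,\tilde g_{\vec t})$. I would resolve it by (strengthening the earlier choice and) taking the points $p_1,\dots,p_{k+2}$ in general position, so that the only isometries of $\mathbb H^2_{-2}$ preserving the configuration $\mathcal H=\Gamma\cdot\{\tilde p_1,\dots,\tilde p_{k+2}\}$ are the elements of $\Gamma$ itself. Since every isometry of $\tilde g_{\vec t}$ preserves the curvature function and hence the plateau $\Omega$, it extends to such a configuration-preserving isometry, so this gives $\isom(\tilde M,\tilde g_{\vec t})=\Gamma$ for all small $\vec t\neq\vec 0$, whence $\Phi\in\Gamma$ and $\psi$ descends (indeed in this generic situation one even concludes $\vec s=\vec t$). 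The delicate point I expect to fight with is verifying that a generic configuration genuinely has no extra symmetry, uniformly in $\vec t$; the multiset (rather than tuple) formulation is what survives once one drops genericity and allows configurations whose symmetries permute the $p_i$, in which case $\Phi$ realizes such a permutation $\sigma$ and color-matching gives $t_{\sigma(i)}=s_i$.
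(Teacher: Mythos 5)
Your starting point --- reading the parameters off the locus of locally constant curvature --- is also the paper's, but your implementation rests on a claim that is unproven and, for the construction as given, false: that for small $\epsilon$ the only open sets on which the curvature of $\tilde g_{\vec t}$ is locally constant are the lifts of the $V_i$ and the plateau. Lemma \ref{lem:small-lambda} gives only uniform convergence of curvatures and nothing of this sort. Concretely, the paper requires of $\rho$ only that $\rho\equiv 1$ on $[0,r_1]$ and $\rho\equiv 0$ on $[r_2,\infty)$; if $\rho$ has an intermediate plateau $\rho\equiv c\in(0,1)$ on a subinterval of $(r_1,r_2)$, then on the corresponding sub-annulus $g_{\vec t}=e^{2ct_i}g_0$ has \emph{constant} curvature $-2e^{-2ct_i}$ for every $t_i$. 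This already damages your first step (a disk value $-2e^{-2s_i}$ can coincide with an annular value $-2e^{-2ct_j}$, so naive value-matching of maximal regions does not give $\{s_i\}=\{t_j\}$; matching homeomorphism types of maximal regions could patch this, but you don't say so), and it is fatal to the descent mechanism as stated: the unbounded component of the curvature-$(-2)$ locus need not equal $\Omega$, nor need it carry the metric $\tilde g_0$, since curvature $\equiv -2$ does not force the conformal factor to be $1$. So ``$\psi$ restricts to a self-isometry of $(\Omega,\tilde g_0)$'' is unjustified without re-engineering the choice of $\rho$, which you would have to add to the construction and verify.

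The second, more serious gap is in the multiplicity step, which is the actual content of the lemma. The generic branch hinges on the unproved assertion that a generic configuration has stabilizer exactly $\Gamma$ in $\isom(\mathbb H^2_{-2})$ (you flag this yourself), and in any case proves a modified statement, since it perturbs the already-chosen $p_i$. The non-generic branch --- the one the multiset statement actually requires --- asserts that $\Phi$ ``realizes a permutation $\sigma$ with $t_{\sigma(i)}=s_i$'', but this presupposes that $\Phi$ carries each $\Gamma$-orbit $\Gamma\tilde p_i$ onto a single $\Gamma$-orbit. All you know is that $\Phi$ lies in the stabilizer $G$ of $\mathcal{H}$, a discrete group containing $\Gamma$ with finite index; since $\Gamma$ need not be normal in $G$, the set $\Phi(\Gamma\tilde p_i)$ may split across several $\Gamma$-orbits, and the permutation does not exist as stated. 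This is repairable --- pass to the normal core $N$ of $\Gamma$ in $G$, note $\Phi$ permutes $N$-orbits, and count colored points in the finite quotient $\mathcal{H}/N$ --- but that argument is absent, as is a proof of the color-matching relation itself, which again needs an area (or maximal-region) argument to locate $\psi(\tilde V^{(z)})$ relative to the inner disk of the target. By contrast, the paper's proof bypasses all of this machinery: choose $\epsilon$ so that $\Area(V_i;g_{\vec t})\geq \frac{3}{4}\Area(U_j;g_{\vec s})$ for all $i,j$; since $s_1\neq 0$, the connected set $\psi(\tilde V_1)$ has constant curvature $\neq -2$ and so lies in a single lift $\tilde U_i$; the area bound forces $\psi(\tilde V_1)\cap \tilde V_i\neq\emptyset$, and evaluating curvature at a point of the intersection gives $s_1=t_i$, with the argument repeated over all indices. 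No plateau identification, no extension to an isometry of $\mathbb H^2_{-2}$, and no genericity of the configuration is needed, so your route is both incomplete where it diverges and substantially heavier than necessary.
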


We recall that a multi-set is a set with multiplicities associated to each element. Equality of multi-sets means not only that the
underlying sets are equal, but that the corresponding multiplicities are equal.

\begin{proof}
By Lemma \ref{lem:small-lambda}, it is possible to pick $\epsilon$ small enough so that, for all parameters $\vec s, \vec t$ within the $\epsilon$-ball around $\vec 0$, we
have that 
	$$\Area(V_i;g_{\vec t})\geq \frac{3}{4}\Area(U_j;g_{\vec s})$$
for every $1\leq i,j\leq k+2$.

Now let us assume that there is an isometry $\Phi: (\tilde M, \tilde g_{\vec s})
\rightarrow (\tilde M, \tilde g_{\vec t})$. Observe that the lifted metrics have the following properties:
\begin{enumerate}[(i)]
\item on the complement of the lifts of the $U_i$, both metrics have curvature identically $-2$.
\item on any lift of the set $V_1$, the metric $\tilde g_{\vec s}$ has curvature identically
$-2e^{-2s_1}$.
\item on any lift of the set $V_i$, the metric $\tilde g_{\vec t}$ has curvature identically $-2e^{-2t_i}$.
\end{enumerate}

Take a lift $\tilde V_1$ of $V_1$ in the source, and consider its image
under $\Phi$. The metric in the source has curvature identically $-2e^{-2s_1}$ on this lift $\tilde V_1$, and
since $\Phi$ is an isometry, the image set $\Phi(\tilde V_1)$ must have the same curvature. From property
(i), we see that $\Phi(\tilde V_1)$ must lie, as a set, inside the union of lifts of the $U_i$. Since $\Phi(\tilde V_1)$
is path-connected, it must lie inside a single connected lift $\tilde U_i$ of one of the $U_i$. But from the area 
estimate, we see that for the $\tilde V_i \subset \tilde U_i$ inside the lift, one has that the intersection 
$\Phi(\tilde V_1) \cap \tilde V_i$ is non-empty. Looking at the curvature of a point in the intersection, we see that
$$-2e^{-2s_1} = -2e^{-2t_i}$$
and hence that $s_1 = t_i$ for some $i$. Applying the same argument to each of $s_i, t_i$ completes the proof.
\end{proof}

Now pick a vector $\vec v = \langle v_1, \ldots , v_{k+2} \rangle$ with the property that $v_1 + \cdots + v_{k+2} = 0$,  
and such that $v_i \neq v_j$ for each $i\neq j$. Notice that the first constraint just means that $\vec v \cdot \nabla F = 0$, and hence that
$\vec v$ is tangent to the $(k+1)$-dimensional submanifold $\sigma$. So there exists a curve $\gamma \subset \sigma$
satisfying $\gamma(0) = \vec 0$, and $\gamma^\prime (0) = \vec v$. Notice that, from our second condition, when 
$t\approx 0$ we have $\gamma(t) \approx (v_1 t, \ldots , v_{k+2}t)$, and hence the point $\gamma(t)$ 
has {\it all coordinates distinct}. It follows from Lemma \ref{lem:multiset} that, for any $t\approx 0$ ($t\neq 0$), 
one can find a small enough connected neighborhood $W_t$ of $\gamma(t)$ with the property that all the metrics in that neighborhood have lifts to the universal cover that are pairwise non-isometric.

\subsection{Metrics with equal area}

Now consider the smooth function $$A:\sigma \rightarrow \mathbb{R}$$ defined by $A(z):=\Area(g_{\sigma(z)})$ for each $z \in \sigma$.    The change of area formula for a conformal change of metric (see the proof of Lemma \ref{lem:small-lambda}) implies that $A$ is nonconstant on $W_t$.  By Sard's theorem, there is a regular value $r$ of $A$ in the interval $A(W_t)$.  Then $\tau := A^{-1}(r)$ is a smooth $k$-dimensional
submanifold of the $(k+1)$-dimensional manifold $\sigma$ consisting of parameters for area $r$ metrics.  A connected component $\mathcal F_k$ of $W_t\cap A^{-1}(r)$ satisfies all of the constraints of Theorem \ref{thm:main2}.

\section{Concluding remarks}\label{sec:conclusion}

As the reader undoubtedly noticed, our results rely heavily on the surprising result of Falconer and Marsh. As such,
it is very specific to the case of circle boundaries -- which essentially restricts us to surface 
groups (see Gabai \cite{G}). In higher dimensions, we would not expect the bi-Lipschitz class
of a self-similar metric on a sphere to be classified by its Hausdorff dimension. Thus, the following problem seems 
substantially more difficult.

\begin{conj}
Let $M$ be a smooth closed manifold of dimension $\geq 3$, and assume that $M$ supports a negatively curved
Riemannian metric. Then $M$ supports a pair of equal volume Riemannian metrics $g_1, g_2$ with curvatures
$\leq -1$, and having the property that
the Riemannian universal covers $(\tilde M, \tilde g_i)$ are almost-isometric, but are {\bf not} isometric.
\end{conj}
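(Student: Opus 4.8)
The plan is to keep the two elementary ingredients of the surface argument of Section \ref{sec:main2} --- non-isometry and equal volume --- essentially intact, and to concentrate all of the new difficulty in the production of the almost-isometry, where the surface proof relied decisively on Falconer--Marsh (Theorem \ref{thm:falconermarsh}). For the setup I would start from a general negatively curved metric $g_0$ on $M$ (no longer hyperbolic, since in dimension $\geq 3$ the manifold $M$ need not carry a constant-curvature metric), rescaled so that its sectional curvatures lie below $-1$ with a definite margin. Perturbing exactly as before, $g_{\vec t}:=e^{2u_{\vec t}}g_0$ with $u_{\vec t}=\sum_i t_i u_i$, the higher-dimensional analog of Lemma \ref{lem:small-lambda} is immediate ($g_{\vec t}\to g_0$ in $C^\infty$, so all small perturbations keep curvature $\leq -1$). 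On each ball $V_i$ the factor $u_{\vec t}$ equals the constant $t_i$, so there $g_{\vec t}=e^{2t_i}g_0$ is a constant rescaling of $g_0$; the local isometry type of the perturbed region therefore records $t_i$ (for instance through the scalar curvature at $p_i$, which is multiplied by the known factor $e^{-2t_i}$). Choosing the sites $p_i$ so that the background models are mutually distinguishable, an isometry of lifts must match these data, and the multiset argument of Lemma \ref{lem:multiset} goes through, giving pairwise non-isometric lifts along a curve with pairwise-distinct coordinate rates. Equal volume is then imposed, as in Section \ref{sec:main2}, by restricting to a regular level set of the (now $n$-dimensional) volume functional via Sard's theorem. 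These parts require no genuinely new idea.

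The crux is the almost-isometry. By Bonk--Schramm (Theorem \ref{thm:bongschramm}) this is equivalent to exhibiting a \emph{bi-Lipschitz} homeomorphism between the visual boundary spheres $(\partial X,\rho_1)$ and $(\partial X,\rho_2)$, where now $\partial X\cong S^{n-1}$ with $n-1\geq 2$, and here the surface strategy collapses. Since no Falconer--Marsh classification exists for higher spheres, equality of Hausdorff dimensions of the two visual metrics --- equivalently, equality of topological entropies, as in Remark \ref{rmk:entropy} --- remains \emph{necessary} (by the mechanism behind Corollary \ref{cor:gap}) but is no longer sufficient. One can still cut out the dimension-matching locus: the first-variation formula of Katok--Knieper--Pollicott--Weiss yields a codimension-one submanifold $\sigma$ of parameter space on which the entropy is constant, so restricting to $\sigma$ equalizes the dimensions of the two boundaries. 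The obstruction is subtler than a single number. A bi-Lipschitz identification of the two spheres cannot be the boundary of any \emph{equivariant} map: such a map would, again by Bonk--Schramm, produce an equivariant almost-isometry of the covers, forcing equal marked length spectra and, via the appropriate rigidity, isometric lifts, contradicting the non-isometry built in above. Hence, exactly as in the surface case, the required map must be \emph{exotic}: it must genuinely permute the cells of the natural self-similar (Markov / Bowen--Series) decomposition of $\partial X$ rather than preserve them.

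I expect this reshuffling to be the decisive obstacle, and I do not expect it to follow from matching any single numerical invariant. The two visual metrics cannot be comparable cell-by-cell --- comparability would hand us an equivariant bi-Lipschitz boundary map, hence an isometry --- so a bi-Lipschitz map must rearrange the symbolic coding with bounded distortion, which is precisely what Falconer--Marsh supplies for free when the boundary is $1$-dimensional. Because the naive higher-dimensional analog is false (Hausdorff dimension does not classify self-similar metrics on $S^{n-1}$ up to bi-Lipschitz once $n-1\geq 2$), any proof must exploit the special structure of our family: the perturbations are supported in finitely many mutually isometric ball-types, so $\partial X$ carries a finitely generated self-similar structure, and one may hope to match the full collection of its expansion ratios rather than only their top-order (entropy) average, using the several parameters at hand. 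A realistic staging is to settle the conjecture first for the most symmetric models --- small perturbations of closed real-hyperbolic manifolds, where the unperturbed boundary is the round sphere and the coding is explicit --- and only afterwards to attempt the general negatively curved case. Carrying out this bounded-distortion reshuffling of the two codings while simultaneously keeping the metrics non-isometric and of equal volume is the essential unresolved point.
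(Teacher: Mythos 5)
This statement is not a theorem of the paper: it appears in Section \ref{sec:conclusion} as an open conjecture, and the authors say explicitly why they expect it to be hard --- ``In higher dimensions, we would not expect the bi-Lipschitz class of a self-similar metric on a sphere to be classified by its Hausdorff dimension'' --- so there is no proof in the paper against which to compare yours. Your proposal, to its credit, arrives at the same diagnosis. The ingredients you transport from Section \ref{sec:main2} (the conformal family $g_{\vec t}$, the Katok--Knieper--Pollicott--Weiss constant-entropy submanifold, Sard's theorem on the volume functional, the multiset argument) are plausible in higher dimensions, and you correctly observe that equalizing entropies only matches the Hausdorff dimensions of the visual spheres $(\partial X, \rho_i)$, which by the mechanism of Corollary \ref{cor:gap} is necessary but, once $\partial X \cong S^{n-1}$ with $n-1 \geq 2$, no longer sufficient for the bi-Lipschitz equivalence that Bonk--Schramm (Theorem \ref{thm:bongschramm}) demands; this is exactly where Falconer--Marsh (Theorem \ref{thm:falconermarsh}) has no higher-dimensional analog. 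Since you yourself declare that step ``the essential unresolved point,'' what you have written is an accurate problem analysis rather than a proof: the gap you leave open \emph{is} the conjecture.

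Two cautions about the parts you treat as routine. First, the surface non-isometry argument (Lemma \ref{lem:multiset}) leans on a clean dichotomy --- curvature identically $-2$ off the lifts of the $U_i$, identically $-2e^{-2t_i}$ on lifts of the $V_i$ --- which is only available because $g_0$ there is hyperbolic. With a general negatively curved background $g_0$ (forced on you in dimension $\geq 3$, since $M$ need not carry a constant-curvature metric), no single curvature value flags the perturbed region, and an isometry of covers is a priori free to carry a lift of $V_1$ into an unperturbed region where the scalar curvature happens to take matching values; your phrase ``choosing the sites $p_i$ so that the background models are mutually distinguishable'' is doing real work and would need an actual argument (genericity of $g_0$ or of the $p_i$, or a more robust local invariant). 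Second, your aside that an equivariant boundary map would force isometric lifts ``via the appropriate rigidity'' invokes marked length spectrum rigidity, which in dimension $\geq 3$ is the open Burns--Katok conjecture (known only in special or perturbative cases), not a citable fact the way \cite{Cr,Ot} is for surfaces; this does not affect the conjecture itself, but it should not be presented as available.
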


In another direction, if one were to {\it drop} the dimension, then there are many examples of $0$-dimensional spaces
having analogous self-similarity properties (i.e. properties (ii), (iii) in Definition \ref{dfn:qcircle}). The metrics on these
boundaries turn them into Cantor sets -- and the classification of (metric) Cantor sets up to bi-Lipschitz equivalence 
seems much more complex than in the circle case (for some foundational results on this problem, see for instance Falconer 
and Marsh \cite{FM} and Cooper and Pignatoro \cite{CP}). Of course, from the viewpoint of boundaries, such spaces
would typically arise as the boundary at infinity of a metric tree $T$. This suggests the following:

\begin{prob}
Study periodic metrics on trees up to the relation of almost-isometry.
\end{prob}

In particular, invariance of the metric under a cocompact group action translates to additional constraints on the 
canonical visual metric on $\partial T$, e.g. the existence of a large (convergence) group action via conformal automorphisms 
(compare with the main theorem in \cite{CP}). It would be interesting to see if this makes the bi-Lipschitz classification 
problem any easier.

\vskip 5pt

Finally, given a pair of quasi-isometric spaces, we can consider the collection of {\it all} quasi-isometries between them, and try to 
find the quasi-isometry which has smallest multiplicative constant. More precisely, given a pair of quasi-isometric metric 
spaces $X_1, X_2$, define the real number $\mu(X_1, X_2)$ to be the infimum of the real numbers $C$ with the property
that there exists some $(C,K)$-quasi-isometry from $X_1$ to $X_2$. We can now formulate the:

\begin{prob}
Given a pair of quasi-isometric metric spaces $X_1, X_2$, can one estimate $\mu(X_1, X_2)$? Can one find a 
$(C,K)$-quasi-isometry from $X_1$ to $X_2$, where $C= \mu(X_1, X_2)$? In particular, can one find a pair of 
quasi-isometric spaces $X_1, X_2$
which are  {\bf not} almost-isometric, but which nevertheless satisfy $\mu(X_1, X_2)=1$?
\end{prob}

Our Corollary \ref{cor:gap} gives a complete answer in the case where the $X_i$ are universal covers of locally CAT(-1)
metrics on surfaces -- the real number $\mu(X_1, X_2)$ is exactly the ratio of the Hausdorff dimensions of the canonical
visual metrics on the boundary, and one can always find a quasi-isometry with multiplicative constant $\mu(X_1, X_2)$. 
It is unclear what to expect in the more general setting.

\end{document}